\newtheorem{theorem}{Theorem}[section]
\newtheorem{lemma}[theorem]{Lemma}
\newtheorem{proposition}[theorem]{Proposition}
\newtheorem{cor}[theorem]{Corollary}
\theoremstyle{definition}
\theoremstyle{remark}
\newtheorem{remark}[theorem]{Remark}
\theoremstyle{question}
\newtheorem*{theorem*}{Theorem}
\numberwithin{equation}{section}
\newcommand{\Q}{\mathbb{Q}}
\newcommand{\red}{}
\begin{document}

\title{On the Northcott property and local degrees}
\author{S. Checcoli}
\address{S.~Checcoli, Institut Fourier, Universit\'e Grenoble Alpes, 100 rue des Math\'ematiques,  38610 Gi\`eres, France}
\email{sara.checcoli@univ-grenoble-alpes.fr}

\author{A. Fehm}
\address{A.~Fehm, Institut f\"ur Algebra, Fakult\"at Mathematik, Technische Universit\"at Dresden, 01062 Dresden, Germany}
\email{arno.fehm@tu-dresden.de}

\begin{abstract} 
We construct infinite Galois extensions $K$ of $\mathbb{Q}$
that satisfy the Northcott property on elements of small height,
and where this property can be deduced solely from the splitting behavior of prime numbers in $K$.
We also give examples of Galois extensions of $\mathbb{Q}$
which have finite local degree at all prime numbers
and do not satisfy the Northcott property.
\end{abstract}
\maketitle

\section{Introduction}

\noindent
Let $\overline{\Q}$ be the field of algebraic numbers and denote by $h:\overline{\Q}\rightarrow \mathbb{R}_{\geq 0}$ the (absolute logarithmic) Weil height.  The set of algebraic numbers of height zero is completely described by a theorem of Kronecker and consists of 0 and all roots of unity, but there are many interesting open problems concerning algebraic numbers of non-zero small height.
Two important statements emerge in this context.
The first, Northcott's theorem, ensures that a set of algebraic numbers whose elements have both their height and their degree bounded is finite.
The second is a famous conjecture of Lehmer, which states that for every algebraic number the product of its height and its degree is either 0 or bigger than an absolute positive constant. This conjecture is still open in general, but has been proved for many classes of algebraic numbers. Another celebrated related problem is the Schinzel-Zassenhaus conjecture on a lower bound for the maximal absolute value of the conjugates of an algebraic integer that is not a root of unity, in terms of its degree,
proved very recently by Dimitrov \cite{Dim}.
Now one could ask in which cases the above statements are still true if one ``forgets the degree''.
More precisely, following Bombieri and Zannier \cite{BZ}, 
a set $L$ of algebraic numbers has the \emph{Northcott property (N)} 
if for every $T>0$ the set of $\alpha\in L$ with $h(\alpha)<T$ is finite, 
and it has the \emph{Bogomolov property (B)} if 
there exists $T>0$ for which there is no $\alpha\in L$ with
$0<h(\alpha)<T$.
It is easy to see that property (N) implies property (B). 

A particularly interesting case is when $L$ is a field.
By Northcott's theorem, both properties hold for number fields, but deciding the validity of these properties for infinite algebraic extensions of $\mathbb{Q}$ is, in general, a difficult problem which has been studied by many authors. 
There has been a lot of work around property (B) in the last years,
among them 
\cite{Sch,AD,AZ1,BZ,DZ,AZ2,Wid,Hab,CW,ADZ,Pot,Pot2,Gal,Frey, Feh, Plessis}.
On the other hand, results on property (N) are quite rare. The first result is 
 \cite[Theorem 1]{BZ}, where the authors considered the compositum $K^{(d)}$ of all extensions of degree at most $d$ of a number field $K$  and proved that property (N) is satisfied by $K^{(d)}_{\rm ab}$, the maximal subextension of $K^{(d)}$ which is abelian over $K$. Some variants of this result were later considered in \cite{CW},
and it was applied to obtain undecidability results in \cite{VV}.

In \cite{BZ}, the authors also studied properties (N) and (B) for 
Galois extensions $L/\Q$ in relation to certain local properties of $L$. Recall that $L$ has \emph{finite local degree}
at a prime number $p$ if the completion $L_v$ of $L$
with respect to \red{any} valuation $v$ extending the $p$-adic valuation on $\mathbb{Q}$ is a finite extension of $\mathbb{Q}_p$.
Bombieri and Zannier proved the following  (see \cite[Theorem 2]{BZ}):

\begin{theorem}[Bombieri--Zannier]
Let $L/\Q$ be a Galois extension and let $S(L)$ be the set of 
prime numbers $p$ for which $L$ has finite local degree at $p$. If $S(L)$ is non-empty, then $L$ has property (B). More precisely, setting 
\begin{equation}\label{BZ-Bogo}
 \mathfrak{S}(L):=\sum_{p\in S(L)}\frac{\log p}{e_p(p^{f_p}+1)}
\end{equation} 
where $e_p$ and $f_p$ denote the ramification index and the inertia degree of $L$ at $p$, respectively, one has
\[\liminf_{\alpha\in L} h(\alpha)\geq \red{\frac{1}{2}\mathfrak{S}(L)}.\]
\end{theorem}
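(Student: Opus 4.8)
The plan is to establish the quantitative bound $\liminf_{\alpha\in L}h(\alpha)\geq\tfrac12\mathfrak{S}(L)$; property (B) then follows, since a non-empty $S(L)$ makes $\mathfrak{S}(L)$ a sum of non-negative terms at least one of which is strictly positive, and a positive value of $\liminf_{\alpha\in L}h(\alpha)$ is precisely what property (B) asks for. If $L/\mathbb{Q}$ is finite the assertion is contained in Northcott's theorem, so I assume $L/\mathbb{Q}$ infinite. Fixing a finite subset $S_0\subseteq S(L)$ and $\varepsilon>0$, it suffices to show that all but finitely many $\alpha\in L$ satisfy $h(\alpha)\geq\tfrac12\sum_{p\in S_0}\frac{\log p}{e_p(p^{f_p}+1)}-\varepsilon$, and then to let $S_0$ exhaust $S(L)$ and $\varepsilon\to0$.

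The core of the argument is a lower bound for the discriminant of the minimal polynomial, exploiting that all conjugates of an element of $L$ reduce into a small residue field. Let $\alpha\in L$ have degree $d=[\mathbb{Q}(\alpha):\mathbb{Q}]$ and conjugates $\alpha_1,\dots,\alpha_d$, and let $P(X)=a_0\prod_{i}(X-\alpha_i)\in\mathbb{Z}[X]$ be its primitive minimal polynomial, so $\disc(P)=a_0^{2d-2}\prod_{i<j}(\alpha_i-\alpha_j)^2$ is a non-zero integer. Since $L/\mathbb{Q}$ is Galois, every $\alpha_i$ lies in $L$. For each $p\in S_0$, fix an embedding $L\hookrightarrow\overline{\mathbb{Q}_p}$; the topological closure $L_v$ of its image is a finite extension of $\mathbb{Q}_p$ with residue field $\mathbb{F}_{q_p}$, where $q_p=p^{f_p}$, and ramification index $e_p$. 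Reducing each $\alpha_i$ to a point of $\mathbb{P}^1(\mathbb{F}_{q_p})$ (sending those with a pole at $v$ to $\infty$), and using $\#\mathbb{P}^1(\mathbb{F}_{q_p})=q_p+1$ together with convexity of $m\mapsto\binom{m}{2}$, the number of pairs $i<j$ whose reductions coincide is at least $(q_p+1)\binom{d/(q_p+1)}{2}\geq\frac{d^2}{2(q_p+1)}-\frac{d}{2}$. Each such pair contributes at least $\frac{2}{e_p}$ to $v_p(\disc(P))$: this combines $v_p(\alpha_i-\alpha_j)\geq\frac{1}{e_p}$ when the two affine reductions agree with the identity $v_p(a_0)=\sum_i\max(0,-v_p(\alpha_i))$ coming from Gauss's lemma, which folds the leading coefficient into a projective $v$-adic distance and so deals uniformly with conjugates having poles above $p$. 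Summing over $p\in S_0$ (and using $v_p(\disc(P))\geq0$ at every prime) yields
\[
\log|\disc(P)|\;\geq\;\sum_{p\in S_0}v_p(\disc(P))\log p\;\geq\;d^2\sum_{p\in S_0}\frac{\log p}{e_p(q_p+1)}\;-\;d\sum_{p\in S_0}\frac{\log p}{e_p}.
\]

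On the other hand there is the standard bound $|\disc(P)|\leq d^{d}\,M(P)^{2d-2}$ (for instance via Hadamard's inequality applied to the Vandermonde determinant), where $M(P)$ is the Mahler measure, so $\log M(P)=d\,h(\alpha)$. Comparing the two estimates and dividing by $(2d-2)d$ gives, for $d\geq2$,
\[
h(\alpha)\;\geq\;\frac{d}{2d-2}\sum_{p\in S_0}\frac{\log p}{e_p(q_p+1)}\;-\;\frac{1}{2d-2}\Bigl(\sum_{p\in S_0}\frac{\log p}{e_p}+\log d\Bigr),
\]
whose right-hand side tends to $\tfrac12\sum_{p\in S_0}\frac{\log p}{e_p(q_p+1)}$ as $d\to\infty$. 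Hence there is a bound $d_0$ such that every $\alpha\in L$ with $[\mathbb{Q}(\alpha):\mathbb{Q}]\geq d_0$ already satisfies $h(\alpha)\geq\tfrac12\sum_{p\in S_0}\frac{\log p}{e_p(q_p+1)}-\varepsilon$; the remaining elements of $L$ have bounded degree, and for those Northcott's theorem leaves only finitely many below this height. Letting $S_0$ grow and $\varepsilon\to0$ finishes the proof.

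I expect the main obstacle to be the local computation in the second paragraph. One must keep careful track of the ramification index $e_p$ when translating between the valuation of $L_v$ and $v_p$, and --- more importantly --- handle the leading coefficient $a_0$ correctly, i.e.\ treat conjugates of $\alpha$ with poles above $p$ on the same footing as the others; this is what makes the relevant counting set $\mathbb{P}^1(\mathbb{F}_{q_p})$, of size $p^{f_p}+1$, rather than $\mathbb{F}_{q_p}^{\times}$, and hence produces the denominator $e_p(p^{f_p}+1)$. The factor $\tfrac12$ is then forced: the discriminant is quadratic in the $\alpha_i$, while the Mahler measure enters the upper bound with the exponent $2d-2$.
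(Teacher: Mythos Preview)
The paper does not actually prove this theorem; it is quoted from Bombieri and Zannier \cite[Theorem~2]{BZ} as background and used only as input for the rest of the paper. So there is no proof in the present paper to compare against.

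That said, your argument is a faithful reconstruction of the original Bombieri--Zannier proof: pigeonhole the reductions of the conjugates into $\mathbb{P}^1(\mathbb{F}_{p^{f_p}})$ to force many $p$-adically close pairs and hence a large $v_p(\disc P)$; bound $|\disc P|\le d^{d}M(P)^{2d-2}$ from above via Hadamard/Mahler; divide by $(2d-2)d$ and let $d\to\infty$, then exhaust $S(L)$ by finite $S_0$. The one step you flag as delicate can be made clean as follows: writing each conjugate as $\alpha_i=a_i/b_i$ with $a_i,b_i\in\mathcal{O}_{L_v}$ and $\min(v(a_i),v(b_i))=0$, Gauss's lemma gives $v_p(a_0)=\sum_i v(b_i)$, and a direct computation then shows
\[
v_p(\disc P)\;=\;2\sum_{i<j}v\bigl(a_ib_j-a_jb_i\bigr),
\]
with every summand $\geq 0$ and $\geq 1/e_p$ precisely when $[a_i:b_i]$ and $[a_j:b_j]$ have the same reduction in $\mathbb{P}^1(\mathbb{F}_{p^{f_p}})$. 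This handles the ``pole'' pairs uniformly and justifies both the denominator $p^{f_p}+1$ and the factor $2/e_p$ you use. With that identity in hand, your chain of inequalities is correct and yields the stated $\liminf$.
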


They also remarked that, if \red{the sum $\mathfrak{S}(L)$ diverges}, the field $L$ has property (N), but they expected this to be the case only for finite extensions of $\Q$ (for which, as already pointed out, property (N) holds).

The first result of this paper is to show that \red{the sum $\mathfrak{S}(L)$ might diverge} also for infinite Galois extensions \red{$L/\Q$, therefore providing} a non-trivial criterion to ensure property (N). In addition there is a certain flexibility in the choice of the Galois groups of such extensions. More precisely, we prove:

\begin{theorem}\label{main-div}
Let $(G_i)_{i\in\mathbb{N}}$ be a family of finite solvable groups. Then there exists a totally real Galois extension $L/\Q$ 
with $\mathrm{Gal}(L/\Q)=\prod_{i\in\mathbb{N}} G_i$\red{, the direct product of the groups $G_i$, such that $L$}
 has finite local degree at all prime numbers, and $\mathfrak{S}(L)=\infty$.
\end{theorem}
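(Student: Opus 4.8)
The plan is to build $L$ as the compositum $L=\prod_{i\in\mathbb{N}}L_i$ of a sequence of \emph{number} fields $L_i/\mathbb{Q}$, each Galois and totally real with $\mathrm{Gal}(L_i/\mathbb{Q})\cong G_i$, chosen inductively so that the finite sets of primes ramifying in the $L_i$ are pairwise disjoint. Disjoint ramification forces $L_i\cap(L_1\cdots L_{i-1})$ to be unramified at every prime, hence equal to $\mathbb{Q}$ by Minkowski, so the $L_i$ are linearly disjoint over $\mathbb{Q}$ and $\mathrm{Gal}(L/\mathbb{Q})=\prod_{i\in\mathbb{N}}G_i$; and a compositum of totally real fields is totally real. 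We may assume infinitely many $G_i$ are nontrivial; otherwise $\prod_iG_i$ is finite and $L$ may be taken to be a totally real number field with this (solvable, hence realizable) group, for which the statement is immediate since a positive density of primes splits completely in any number field.

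Suppose $L_1,\dots,L_{i-1}$ have been built. I fix a prime $\ell_i\mid|G_i|$ and a subgroup $C_i\le G_i$ of order $\ell_i$, and pick an integer $B_i$, with $B_i\to\infty$. By Chebotarev applied to $L_1\cdots L_{i-1}(\zeta_{\ell_i})$, the primes $q\equiv1\pmod{\ell_i}$ splitting completely in $L_1\cdots L_{i-1}$ have positive density, so $\sum\frac{\log q}{q}$ over them diverges; hence I can choose a finite set $Q_i$ of such primes, all larger than $B_i$ and than every prime ramifying in $L_1\cdots L_{i-1}$, with $\sum_{q\in Q_i}\frac{\log q}{q+1}\ge\ell_i$. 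I then ask $L_i/\mathbb{Q}$ to be Galois and totally real with group $G_i$; to split completely at every prime $<B_i$ and at every prime ramifying in $L_1\cdots L_{i-1}$; and to have, at each $q\in Q_i$, completion a totally (tamely) ramified cyclic degree-$\ell_i$ extension of $\mathbb{Q}_q$ — which exists because $\ell_i\mid q-1$ — with decomposition group $C_i$, so that $e_q=\ell_i$ and $f_q=1$ for $L_i/\mathbb{Q}$ at $q$.

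Producing such an $L_i$ is the technical heart of the argument, and the only place solvability of $G_i$ is really used: these are finitely many prescribed local conditions, so it is a \emph{Grunwald problem} for the solvable group $G_i$, solvable by the classical theory of solvable embedding problems with local constraints over number fields (the refined form of Shafarevich's theorem; see e.g.\ Neukirch--Schmidt--Wingberg). One builds $L_i$ along a chief series of $G_i$, solving at each step an embedding problem matching the local data; the one subtlety, the Grunwald--Wang exceptional case at $2$, does not occur here because the condition imposed at the prime $2$ is merely ``split completely''. If this realization happens to ramify at finitely many extra primes $E_i$, one simply requires, at every later step, that the next $L_j$ split completely at $E_i$ as well (and chooses later $Q_j$, $B_j$ to avoid $E_i$); this changes nothing.

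Finally I verify the three conclusions. Totally real and $\mathrm{Gal}(L/\mathbb{Q})=\prod_iG_i$ hold by construction. For any prime $p$: $p$ ramifies in at most one $L_i$ (disjoint ramification) and splits completely in $L_j$ for every $j$ with $B_j>p$, so its decomposition group in $L$, being trivial in all but finitely many coordinates, is finite; hence $L$ has finite local degree at every prime and $S(L)$ is the set of all primes. If moreover $p\in Q_{i_0}$, then $p$ splits completely in $L_j$ for \emph{every} $j\ne i_0$ — for $j<i_0$ because $Q_{i_0}$ was chosen inside the splitting locus of $L_1\cdots L_{i_0-1}$, and for $j>i_0$ by the construction of $L_j$ — so that $e_p=\ell_{i_0}$ and $f_p=1$ for $L/\mathbb{Q}$ at $p$. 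Therefore
\[
\mathfrak{S}(L)\ \ge\ \sum_{i\in\mathbb{N}}\ \sum_{q\in Q_i}\frac{\log q}{e_q\bigl(q^{f_q}+1\bigr)}\ =\ \sum_{i\in\mathbb{N}}\frac{1}{\ell_i}\sum_{q\in Q_i}\frac{\log q}{q+1}\ \ge\ \sum_{i\in\mathbb{N}}1\ =\ \infty,
\]
as required. The genuinely hard step is the solvable Grunwald realization of the third paragraph; everything else is bookkeeping with Chebotarev's theorem and elementary ramification theory.
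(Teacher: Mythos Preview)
Your construction is correct, but it takes a different—and in one key respect heavier—route than the paper. The paper never prescribes ramification: it uses only the \emph{splitting} version of Shafarevich (Theorem~\ref{sha-split} and Corollary~\ref{cor:sha}). The divergence of $\mathfrak{S}(L)$ is obtained not by engineering $e_q=\ell_i$, $f_q=1$ at hand-picked primes $q\in Q_i$, but from the elementary fact (Proposition~\ref{div-nf}) that $\mathfrak{S}(L_{i-1})=\infty$ already for the number field $L_{i-1}$. One simply chooses $n_i$ with $\sum_{n_{i-1}\le p<n_i}\mathfrak{S}_p(L_{i-1})\ge 1$ and then asks $F_i$ (and all later $F_j$) to split completely at every prime $p\le n_i$; those terms then survive unchanged in $\mathfrak{S}(L)$. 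Linear disjointness is obtained directly from Corollary~\ref{cor:sha} rather than via your disjoint-ramification/Minkowski argument.

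Your approach instead solves a genuine Grunwald problem for $G_i$ with prescribed nontrivial local behavior at the primes in $Q_i$. That is a strictly stronger input than Theorem~\ref{sha-split}, and while your treatment of the Wang obstruction (trivial condition at $2$, hence no special case along the chief series) is reasonable, note that the paper already found it necessary to supply a proof of the \emph{weaker} splitting statement in Appendix~\ref{shafarevich} for lack of a written reference; the full Grunwald realization you invoke is correspondingly harder to source precisely in NSW. What your route buys is explicit knowledge of $(e_p,f_p)$ at the primes governing divergence; what the paper's route buys is a weaker, self-contained realization hypothesis and a shorter argument, since Proposition~\ref{div-nf} does all the analytic work that your Chebotarev step and the choice of $Q_i$ do by hand.
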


The proof of this theorem is carried out in Section \ref{proof-main-div}. Our construction is elementary, but it requires a sharper version Shafarevich's theorem on the realisability of solvable groups with splitting conditions, namely the fact that every finite solvable group can be realised over $\Q$ by a Galois extension which is totally split at all prime numbers in a given finite set (Theorem \ref{sha-split}), which we prove in Appendix \ref{shafarevich}.
Theorem \ref{main-div} 
leads to undecidability results (Remark \ref{rem-undecidability})
and also holds for certain non-solvable groups $G_i$ (Remark \ref{rem-Gru}).

As an easy corollary, we recover a result already proved  in \cite[Theorem 4]{CW}:
\begin{cor} Every direct product $\prod_{i\in\mathbb{N}}G_i$ of finite solvable groups can be realised over $\Q$ by a Galois extension with property (N).
\end{cor}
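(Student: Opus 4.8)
The plan is to deduce this directly from Theorem~\ref{main-div} together with the Bombieri--Zannier bound quoted in the introduction. Given a family of finite solvable groups $(G_i)_{i\in\mathbb{N}}$, I would apply Theorem~\ref{main-div} to obtain a totally real Galois extension $L/\Q$ with $\Gal(L/\Q)=\prod_{i\in\mathbb{N}}G_i$, which has finite local degree at every prime number and satisfies $\mathfrak{S}(L)=\infty$. It then only remains to check that this divergence forces property~(N) on $L$, which is precisely the observation of Bombieri and Zannier recalled above; the corollary is thus a repackaging of Theorem~\ref{main-div} in a form that does not mention local degrees.

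To spell this out, first note that, $L$ being totally real, the only roots of unity it contains are $\pm 1$, so by Kronecker's theorem the set $\{\alpha\in L:\, h(\alpha)=0\}=\{0,1,-1\}$ is finite. On the other hand, applying the Bombieri--Zannier theorem to $L$ gives $\liminf_{\alpha\in L}h(\alpha)\ge \frac{1}{2}\mathfrak{S}(L)=\infty$; unwinding the meaning of this $\liminf$ (the supremum over finite subsets $F\subseteq L$ of $\inf_{\alpha\in L\setminus F}h(\alpha)$), this says exactly that for every $T>0$ the set $\{\alpha\in L:\, 0<h(\alpha)<T\}$ is finite. Combining the two observations, for every $T>0$ the set $\{\alpha\in L:\, h(\alpha)<T\}$ is finite, i.e.\ $L$ has property~(N), and this $L$ realizes $\prod_{i\in\mathbb{N}}G_i$ as required.

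I do not expect a genuine obstacle here: the entire difficulty has been absorbed into Theorem~\ref{main-div} (and, through it, into the splitting version of Shafarevich's theorem), so that the corollary is a formal consequence. The one point that deserves a moment's care is that property~(N), in contrast to property~(B), also constrains the elements of height $0$; this is handled by the total reality of $L$, which prevents $L$ from containing infinitely many roots of unity. This is exactly how Bombieri and Zannier anticipated that the divergence of $\mathfrak{S}(L)$ would yield property~(N), and it reproves \cite[Theorem~4]{CW}.
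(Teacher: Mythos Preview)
Your argument is correct and follows the paper's intended route: the corollary is simply Theorem~\ref{main-div} together with Bombieri and Zannier's remark that $\mathfrak{S}(L)=\infty$ forces property~(N). Your separate handling of height-zero elements via total reality is harmless but unnecessary: once $\liminf_{\alpha\in L}h(\alpha)=\infty$, the set $\{\alpha\in L:h(\alpha)<T\}$ is already finite for every $T$ by the very definition of the $\liminf$, so no special treatment of roots of unity is required.
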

The proof of this result in \cite{CW} was quite different and employed a  criterion of Widmer \cite[Theorem 3]{Wid}, which, after the work of Bombieri and Zannier, has been essentially the only other known result concerning property (N). His criterion is based  on the growth of certain discriminants in infinite towers of number fields. We recall it here:

\begin{theorem}[Widmer]\label{thm-wid} 
Let $K_0\subsetneq K_1\subsetneq\ldots$ be a tower of number fields. Suppose that
\begin{equation}\label{eq-wid}\inf_{K_{i-1}\subsetneq M\subseteq K_i} \left(N_{K_{i-1}/\Q}\left(D_{M/K_{i-1}}\right)\right)^{\frac{1}{[M:K_0][M:K_{i-1}]}}\rightarrow \infty\; \text{ as }\; i\rightarrow \infty\end{equation}
where $M$ varies among the intermediate fields of $K_{i}$ strictly containing $K_{i-1}$, $D_{M/K_{i-1}}$ denotes the relative discriminant \red{of the extension $M/K_{i-1}$} and $N_{K_{i-1}/\Q}\left(D_{M/K_{i-1}}\right)$ denotes the unique positive integer generating the \red{norm} ideal. Then the field $L=\bigcup_{i\geq0} K_i$ has property (N).
\end{theorem}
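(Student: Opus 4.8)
The natural approach is an argument by contradiction, reducing to Northcott's theorem for a single number field via a uniform upper bound on relative discriminants in terms of the Weil height of a generator. Suppose $L=\bigcup_{i\geq 0}K_i$ fails to have property (N). Then there is a $T>0$ and infinitely many pairwise distinct $\alpha_n\in L$ with $h(\alpha_n)\leq T$. For each $n$ let $i(n)$ be the smallest index with $\alpha_n\in K_{i(n)}$. If the sequence $(i(n))_n$ were bounded, say by $N$, then all the $\alpha_n$ would lie in the single number field $K_N$, and Northcott's theorem would force them to be finite in number, a contradiction; hence, after passing to a subsequence, we may assume $i(n)\to\infty$, and in particular $i(n)\geq 1$. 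Set $M_n:=K_{i(n)-1}(\alpha_n)$. By minimality of $i(n)$ we have $\alpha_n\notin K_{i(n)-1}$, so $K_{i(n)-1}\subsetneq M_n\subseteq K_{i(n)}$, which means that $M_n$ is one of the intermediate fields appearing in the infimum (\ref{eq-wid}) for $i=i(n)$.

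The crux is the following uniform estimate: there is a constant $C=C(T,K_0)$, depending only on $T$ and $K_0$, such that
\[
\Bigl(N_{K_{i(n)-1}/\Q}\bigl(D_{M_n/K_{i(n)-1}}\bigr)\Bigr)^{\frac{1}{[M_n:K_0]\,[M_n:K_{i(n)-1}]}}\leq C\qquad\text{for all }n.
\]
To establish this, write $F=K_{i(n)-1}$, $M=M_n$, $d=[M:F]$, and let $f\in F[X]$ be the monic minimal polynomial of $\alpha_n$ over $F$. When $\alpha_n$ is integral over $\mathcal{O}_F$ one has $D_{M/F}\mid\disc(f)\mathcal{O}_F$, and, estimating each archimedean absolute value of $\disc(f)=\prod_{i<j}(\alpha^{(i)}-\alpha^{(j)})^2$ by Mahler's inequality $|\disc(\sigma f)|\leq d^{d}M(\sigma f)^{2d-2}$ together with $\prod_{\sigma\colon F\hookrightarrow\mathbb{C}}M(\sigma f)=e^{[M:\Q]\,h(\alpha_n)}$, one gets
\[
\log N_{F/\Q}\bigl(D_{M/F}\bigr)\;\leq\; d\,[F:\Q]\log d\;+\;(2d-2)\,[M:\Q]\,h(\alpha_n)
\]
(in the non-integral case one argues similarly after suitably clearing denominators, or invokes the estimate of Silverman). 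Raising to the power $\tfrac{1}{[M:K_0]\,d}=\tfrac{[K_0:\Q]}{[M:\Q]\,d}$ and using $[F:\Q]=d^{-1}[M:\Q]$, the two factors $[M:\Q]$ and one factor $d$ cancel, leaving
\[
\Bigl(N_{F/\Q}\bigl(D_{M/F}\bigr)\Bigr)^{\frac{1}{[M:K_0]\,d}}\;\leq\; d^{\,[K_0:\Q]/d}\cdot e^{\,2[K_0:\Q]\,h(\alpha_n)}\;\leq\; 2^{[K_0:\Q]}\,e^{\,2[K_0:\Q]\,T},
\]
where we used $d^{1/d}\leq e^{1/e}<2$ and $(2d-2)/d<2$. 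This is the asserted bound with $C=2^{[K_0:\Q]}e^{2[K_0:\Q]T}$, independent of $n$, of the level $i(n)$, and of $M_n$.

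Combining the estimate with the hypothesis finishes the argument: since $i(n)\to\infty$, for all large $n$ we obtain from (\ref{eq-wid}) that
\[
C\;\geq\;\Bigl(N_{K_{i(n)-1}/\Q}\bigl(D_{M_n/K_{i(n)-1}}\bigr)\Bigr)^{\frac{1}{[M_n:K_0][M_n:K_{i(n)-1}]}}\;\geq\;\inf_{K_{i(n)-1}\subsetneq M\subseteq K_{i(n)}}\Bigl(N_{K_{i(n)-1}/\Q}\bigl(D_{M/K_{i(n)-1}}\bigr)\Bigr)^{\frac{1}{[M:K_0][M:K_{i(n)-1}]}}\;\longrightarrow\;\infty,
\]
a contradiction. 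Hence $L$ has property (N).

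I expect the main obstacle to be the discriminant--height inequality in the middle step: one must control the genuine relative discriminant $D_{M/F}$ (not merely the discriminant of the order $\mathcal{O}_F[\alpha_n]$), uniformly in all the degrees involved, and handle the case where $\alpha_n$ is not an algebraic integer, where one cannot clear denominators without enlarging the height. The peculiar normalization $[M:K_0]\,[M:K_{i-1}]$ in the exponent of (\ref{eq-wid}) is tailored precisely so that all degree-dependent factors in that inequality cancel, leaving only a dependence on $T$ and the base field $K_0$; any bound on $\log N_{K_{i-1}/\Q}(D_{M/K_{i-1}})$ that is essentially linear in $h(\alpha)$ with coefficients polynomial in the degrees would serve equally well.
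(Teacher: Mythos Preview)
The paper does not prove this theorem; it is quoted from Widmer \cite[Theorem~3]{Wid} as a background result, so there is no in-paper proof to compare against.

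That said, your argument is correct and is essentially Widmer's own. The reduction via Northcott's theorem to levels $i(n)\to\infty$, the choice $M_n=K_{i(n)-1}(\alpha_n)$, and the discriminant--height estimate are exactly the ingredients of \cite{Wid}. The one soft spot is the non-integral case, which you already flag: scaling $\alpha_n$ by a denominator can raise the height without control, so ``clearing denominators'' is not literally available. The clean fix---and what Widmer actually uses---is Silverman's inequality (J.~H.~Silverman, \emph{Lower bounds for height functions}, Duke Math.\ J.\ \textbf{51} (1984), Theorem~2), which for an arbitrary generator $\alpha$ of $M=F(\alpha)$ over $F$ gives
\[
\log N_{F/\Q}\bigl(D_{M/F}\bigr)\;\leq\;[M:\Q]\Bigl(\log[M:F]+2\bigl([M:F]-1\bigr)\,h(\alpha)\Bigr),
\]
valid whether or not $\alpha$ is integral. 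Dividing by $[M:K_0]\cdot[M:F]$ and bounding $d^{1/d}\leq e^{1/e}$ as you do yields exactly your constant $C=C(T,K_0)$, and the contradiction follows. With Silverman's inequality cited in place of the integral-case computation, the proof is complete.
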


As a second result of this paper, we show the existence of infinite Galois extensions $L/\Q$ with property (N) that satisfy the conclusion of Theorem \ref{main-div} (i.e. having $\mathfrak{S}(L)=\infty$), but to which neither Widmer's criterion nor the result of Bombieri and Zannier on $K^{(d)}_{\rm ab}$ applies.
 More precisely, in Section \ref{sect-comp-widmer} we prove the following:
\begin{theorem}\label{thm-comp-widmer}
There exists an infinite pro-cyclic Galois extension $L/\Q$ with  $\mathfrak{S}(L)=\infty$, but such that every tower of number fields $\Q=K_0\subsetneq K_1\subsetneq \ldots$ with $L=\bigcup_{i\geq0}K_i$  does not satisfy condition \eqref{eq-wid} of \red{Theorem \ref{thm-wid}.}
\end{theorem}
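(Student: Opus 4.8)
The plan is to realise $L$ as an increasing union $L=\bigcup_{k\ge 1}E_k$ of number fields, where $E_k=F_1\cdots F_k$ and each $F_k/\Q$ is cyclic of prime degree $q_k$, the $q_k$ being pairwise distinct and the $F_k$ pairwise linearly disjoint over $\Q$. Then $\Gal(L/\Q)\cong\prod_k\Z/q_k$, which is pro-cyclic (the element with all coordinates $1$ topologically generates it, because the $q_k$ are distinct primes) and infinite. Two requirements must be met, and they pull in opposite directions. To get $\mathfrak{S}(L)=\infty$ it is enough to make some set $P$ of primes with $\sum_{p\in P}(\log p)/p=\infty$ split completely in $L$: then $e_p=f_p=1$ for every $p\in P$, so $\mathfrak{S}(L)\ge\sum_{p\in P}\frac{\log p}{p+1}=\infty$. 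To make condition \eqref{eq-wid} fail for \emph{every} tower it will suffice to arrange that the root discriminants $\mathrm{rd}(F_k):=|\disc(F_k/\Q)|^{1/q_k}$ satisfy $\mathrm{rd}(F_k)^{1/q_k}<e$ for all $k$, i.e.\ that each $F_k$ is ramified only at fairly small primes.

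I would build the $F_k$ recursively. At stage $k$, with $E_{k-1}$ already constructed, first choose a finite set $P_k$ of primes that split completely in $E_{k-1}$, are disjoint from $P_1\cup\cdots\cup P_{k-1}$, and satisfy $\sum_{p\in P_k}(\log p)/p\ge 1$; this is possible since the primes split in $E_{k-1}$ have positive density, so the associated partial sums of $(\log p)/p$ diverge. Put $S_k:=P_1\cup\cdots\cup P_k$. The heart of the argument is the construction of $F_k$: choose a prime $q_k>q_1,\dots,q_{k-1}$ with $q_k\ge(|S_k|+1)^2$ and large enough for the estimates below, then $r:=|S_k|+1$ distinct primes $\pi'_1,\dots,\pi'_r$ with $\pi'_i\equiv 1\pmod{q_k}$, all below $e^{\sqrt{q_k}}$, avoiding $S_k$ and the primes ramified in $E_{k-1}$; such primes exist by the prime number theorem in arithmetic progressions (in the form of the Siegel--Walfisz theorem, say), because $q_k$ is enormous compared with $|S_k|$. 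A cyclic degree-$q_k$ subfield $F_k$ of $\Q(\zeta_{\pi'_1\cdots\pi'_r})$ corresponds to a nonzero homomorphism $\chi=(a_{\pi'_i})_i\colon\prod_i(\Z/\pi'_i)^\times\to\Z/q_k$, and for a prime $p\nmid\prod_i\pi'_i$ the field $F_k$ splits completely at $p$ if and only if $\sum_i a_{\pi'_i}\psi_{\pi'_i}(p)=0$ in $\Z/q_k$, where $\psi_{\pi'_i}(p)$ denotes the class of $p$ in $(\Z/\pi'_i)^\times/((\Z/\pi'_i)^\times)^{q_k}\cong\Z/q_k$. Imposing this for all $p\in S_k$ is a homogeneous linear system of $|S_k|<r$ equations in the $r$ unknowns $a_{\pi'_i}$ over the field $\Z/q_k$, hence has a nonzero solution; this choice of $\chi$ defines $F_k$. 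By construction $F_k$ splits completely at $S_k$, is ramified only at (some of) the $\pi'_i$, and is linearly disjoint from $E_{k-1}$.

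It then remains to check the two properties of $L=\bigcup_kE_k$. For the first: each $p\in P_j$ splits completely in $E_{j-1}$ by choice, in $F_j$ because $p\in S_j$, and in $F_k$ for $k>j$ because $p\in S_k$; so $p$ splits completely in $L$, and $\mathfrak{S}(L)\ge\sum_{p\in P}\frac{\log p}{p+1}=\infty$ with $P=\bigcup_kP_k$. For the second, the conductor--discriminant formula gives $|\disc(F_k/\Q)|=\big(\prod_{i:\,a_{\pi'_i}\ne 0}\pi'_i\big)^{q_k-1}$, so $\mathrm{rd}(F_k)<\prod_i\pi'_i<e^{r\sqrt{q_k}}$ and $\mathrm{rd}(F_k)^{1/q_k}<e^{(|S_k|+1)/\sqrt{q_k}}\le e$. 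Now let $\Q=K_0\subsetneq K_1\subsetneq\cdots$ be any tower with $\bigcup_iK_i=L$. Since every closed subgroup of $\prod_k\Z/q_k$ has the form $\prod_{k\in A}\Z/q_k$ (the $q_k$ being distinct primes), each $K_i$ is the compositum of $\{F_k:k\in B_i\}$ for a finite set $B_i$, with $\emptyset=B_0\subsetneq B_1\subsetneq\cdots$ and $\bigcup_iB_i=\mathbb{N}$. Choosing $\kappa_i\in B_i\setminus B_{i-1}$ and $M_i:=K_{i-1}F_{\kappa_i}$, one has $K_{i-1}\subsetneq M_i\subseteq K_i$, $[M_i:K_{i-1}]=q_{\kappa_i}$, and, by the tower formula for discriminants together with $D_{M_i/K_{i-1}}\mid\disc(F_{\kappa_i}/\Q)\,\mathcal{O}_{K_{i-1}}$,
\[
\big(N_{K_{i-1}/\Q}(D_{M_i/K_{i-1}})\big)^{\frac{1}{[M_i:K_0][M_i:K_{i-1}]}}=\Big(\tfrac{\mathrm{rd}(M_i)}{\mathrm{rd}(K_{i-1})}\Big)^{1/q_{\kappa_i}}\le\mathrm{rd}(F_{\kappa_i})^{1/q_{\kappa_i}}<e .
\]
Hence the infimum in \eqref{eq-wid} is bounded by $e$ for every $i$, so this tower does not satisfy \eqref{eq-wid}.

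The step I expect to be the main obstacle is precisely the simultaneous fulfilment of the two requirements. Forcing $\mathfrak{S}(L)=\infty$ makes the sets $S_k$ of primes at which $F_k$ must split unavoidably large --- the primes available at stage $k$ (those split in $E_{k-1}$) get sparser and sparser, so many of them, and hence a large $|S_k|$, are needed to keep the partial sums of $(\log p)/p$ from converging --- whereas a small root discriminant for $F_k$ forces $F_k$ to ramify only at small primes. The way out is the observation that ``splitting completely at $S_k$'' amounts to only $|S_k|$ linear conditions over $\Z/q_k$ on the choice of $F_k$, which can be satisfied using just $|S_k|+1$ small auxiliary primes $\equiv 1\pmod{q_k}$ as the ramification locus, provided $q_k$ is taken huge relative to $|S_k|$; since distinct primes are genuinely independent modulo $q_k$-th powers, $q_k$ really must outgrow $|S_k|$, which forces the sequence $q_1<q_2<\cdots$ to increase very rapidly.
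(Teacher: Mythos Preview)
Your proposal is correct and follows essentially the same approach as the paper: both build $L$ as a compositum of cyclic extensions $F_k/\Q$ of large distinct prime degree, each totally split at a growing finite set of primes (to force $\mathfrak{S}(L)=\infty$) and ramified only at a few small auxiliary primes $\equiv 1\pmod{q_k}$ found via Siegel--Walfisz (to force $|\Delta_{F_k}|^{1/q_k^2}$ bounded), and both deduce failure of~\eqref{eq-wid} by identifying the subfields of $L$ with composita of the $F_k$ and using coprimality of discriminants. Your linear-algebra description of the choice of $F_k$ inside $\Q(\zeta_{\pi'_1\cdots\pi'_r})$ is exactly the paper's ``subfield fixed by the Frobenius elements $\sigma_q$, $q\in S$'' phrased differently, and your bound $\mathrm{rd}(F_k)^{1/q_k}<e$ matches the paper's $|\Delta_{F_k}|^{1/q_k^2}\le 3$ up to the choice of constant.
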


The third result of this paper is again on the relation, for a Galois extension $L/\Q$, between its local degrees and property (N).
One of the ingredients used in the proof of property (N) for $K^{(d)}_{\rm ab}$ is the fact that the local degrees of the field $K^{(d)}$ are uniformly bounded \red{at all prime numbers}, and in \cite{BZ} it is asked whether it is true that property (N) holds for  all fields with this local property. This seems to be out of reach for the moment: indeed it is still open for the field $K^{(d)}$ itself when $d\geq 3$ (as $K^{(2)}/K$ is abelian), even in the simplest case $K=\Q$ and $d=3$. 

Then one could ask the more modest \red{question} of whether property (N) holds for all extensions with finite
(but not necessarily uniformly bounded)
 local degrees. In \cite[Proposition 1.3]{Feh} the author constructed extensions having local degree one at infinitely many prime numbers \red{(but also infinite local degree at infinitely many other prime numbers)} without property (N). We prove the existence of extensions with finite local degree at every prime number
which do not have property (N):
\begin{theorem}\label{main-bld}
There exists an infinite Galois extension $L/\Q$ which has finite local degree at all prime numbers, but does not have property (N).
\end{theorem}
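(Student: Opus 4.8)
The plan is to build $L$ as an increasing union of number fields $\Q=N_0\subset N_1\subset N_2\subset\cdots$, each Galois over $\Q$, designed to meet two requirements at once. For the failure of (N): we arrange that there is a constant $C$ and, for each $i$, an element $\beta_i\in N_i$ with $0<h(\beta_i)\le C$ and $[\Q(\beta_i):\Q]\to\infty$; since every number field satisfies (N), only finitely many of the $\beta_i$ can lie in any fixed $N_j$, so (after passing to a subsequence) they are pairwise distinct and $L$ has infinitely many elements of height $\le C$. For the finiteness of all local degrees: fixing an enumeration $p_1,p_2,\dots$ of the primes, we arrange that $N_i/N_{i-1}$ is totally split at all places lying over $p_1,\dots,p_i$; then for each $k$ the local degree of $N_i$ at $p_k$ is independent of $i$ once $i\ge k$, so $L$ has finite local degree at every $p_k$. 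If both requirements are met, $L$ is an infinite Galois extension of $\Q$ with finite local degree at every prime and without property (N), which proves the theorem.

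Realizing the splitting condition at stage $i$ is exactly the kind of problem addressed by the refined Shafarevich theorem of the Appendix (Theorem \ref{sha-split}): applied over the number field $N_{i-1}$, a suitable version of it produces finite solvable extensions with prescribed total splitting above a given finite set of places, and it is here that the construction of the $\beta_i$ must be interleaved, in an inductive and diagonal fashion, with these realizations. For the small-height elements one needs a reservoir of algebraic numbers of bounded height and arbitrarily large degree; note that since $L$ will have finite local degree at some prime it must satisfy property (B), hence the heights $h(\beta_i)$ cannot tend to $0$ and the $\beta_i$ must have Mahler measure growing exponentially in their degree — the natural candidates are generators, of bounded height, of suitable non-abelian Galois number fields of large degree (for instance, fields built as towers of abelian steps but non-abelian over $\Q$).

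The main obstacle is the conflict between these two requirements. Algebraic numbers of very small height tend to generate fields close to abelian — the height-zero ones are precisely the roots of unity, and near-minimal examples such as $\zeta_q+\zeta_q^{-1}$ generate cyclotomic fields — whereas abelian structure is incompatible with finite local degree everywhere: a Galois extension with finite local degree at even one prime contains only finitely many roots of unity, and any field containing $\zeta_{q_i}$ for infinitely many primes $q_i$ has infinite local degree at every prime. So the $\beta_i$ must be forced to generate genuinely non-abelian extensions while keeping their height bounded, and — what makes the bookkeeping delicate — at each prime $p$ one must first ``inflate'' the local degree (through steps ramified or inert at $p$, so that $e_p$ or $f_p$ grows, which is consistent with the inequality $\mathfrak S(L)<\infty$ that the failure of (N) forces via the Bombieri--Zannier bound) before ``freezing'' it by making all later steps totally split at $p$. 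The heart of the argument is to show that at every stage one can choose a new non-abelian small-height extension of larger degree that remains compatible with the splitting already imposed at $p_1,\dots,p_{i-1}$, so that the reservoir of small-height fields and the Shafarevich-type realizations fit together into one tower.
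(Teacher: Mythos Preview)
Your overall architecture is right: build $L$ so that from stage $i$ onward nothing new happens at $p_i$, while at each stage a new element of bounded height is added. But the proposal has a genuine gap at its self-identified ``heart'': you never produce the $\beta_i$. You assert that one needs ``a reservoir of algebraic numbers of bounded height and arbitrarily large degree'' compatible with the splitting constraints, and you correctly diagnose why this is delicate, but you give no construction. Shafarevich's theorem (Theorem~\ref{sha-split}) lets you realize a solvable group over $N_{i-1}$ with prescribed total splitting, but it gives no control whatsoever on heights in the resulting field; there is no reason a Shafarevich realization should contain a primitive element of height below an absolute constant. The suggestion of ``generators, of bounded height, of suitable non-abelian Galois number fields of large degree'' is not a construction but a restatement of what has to be proved.

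The paper fills exactly this gap by invoking Fili's theorem (Theorem~\ref{Filthm}), which is a ready-made reservoir: if $L_i$ is the maximal Galois extension of $\Q$ inside the unramified quadratic extension of $\Q_{p_i}$, then
\[
\liminf_{\alpha\in\bigcap_{i\le n}L_i}h(\alpha)\;\le\;\sum_{i\le n}\frac{\log p_i}{p_i^{2}-1},
\]
and the right-hand side is bounded uniformly in $n$. One then picks $x_n\in\bigcap_{i\le n}L_i$ of height below this bound and takes $L$ to be the Galois closure of $\Q(x_1,x_2,\dots)$. The local-degree condition comes for free---since $x_n\in L_i$ for all $n\ge i$ and $L_i/\Q$ is Galois with completion inside $E_i$, only $x_1,\dots,x_{i-1}$ contribute to the local degree of $L$ at $p_i$. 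No Shafarevich-type realization is needed; the whole weight of the argument rests on Fili's height bound, and without it (or a result of comparable strength, such as the Bombieri--Zannier estimate it generalizes) your scheme does not go through.
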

Theorem \ref{main-bld} is proved in Section \ref{proof-main-bld}
where it is deduced from a theorem of Fili \red{\cite[Theorem 2]{Fil} (see also Theorem \ref{Filthm})} regarding heights in fields of totally $S$-adic numbers,
generalizing a result of Bombieri and Zannier \cite[Example 2]{BZ}.


\section{Proof of Theorem \ref{main-div}}\label{proof-main-div}

\noindent
Let $L/\Q$ be a (possibly infinite) Galois extension and let $S(L)$ be the set of prime numbers $p$ such that $L$ has \red{finite} local degree at $p$. For every $p\in S(L)$ denote by $e_p$ and $f_p$ the ramification index and inertia degree of $L$ at $p$, respectively. As already recalled, in \cite[Theorem 2]{BZ} Bombieri and Zannier studied, when $S(L)$ is not empty, the quantity \[\mathfrak{S}(L)=\sum_{p \in S(L)}\frac{\log p}{e_p(p^{f_p}+1)}\] in connection with property (B). 
They also remarked that if $\mathfrak{S}(L)=\infty$, then $L$ has property (N). This is the case when the extension $L/\Q$ is finite, as follows from Chebotarev's density theorem and the prime number theorem. We repeat the proof of this fact here for the convenience of the reader.
\begin{proposition}\label{div-nf}
Let $L/\Q$ be a finite Galois extension. Then  $\mathfrak{S}(L)=\infty$.
\end{proposition}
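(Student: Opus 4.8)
The plan is to show that $\mathfrak{S}(L)$ diverges by comparing it, up to harmless constants, with the divergent series $\sum_p \frac{\log p}{p}$ over all primes, using the fact that for a finite Galois extension only finitely many primes are exceptional. First I would note that since $L/\Q$ is finite, every prime $p$ lies in $S(L)$ (the local degree $[L_v:\Q_p]$ divides $[L:\Q]<\infty$), so $S(L)$ is the set of all primes. Moreover $e_p = 1$ and $f_p = 1$ for all but finitely many $p$: ramification occurs only at the finitely many primes dividing the discriminant of $L/\Q$, and $f_p = 1$ precisely when $p$ splits completely, which by the Chebotarev density theorem happens for a positive-density (in particular infinite) set of primes. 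So for the purpose of a lower bound I may restrict the sum to the set $T$ of primes that split completely in $L$, where each term is exactly $\frac{\log p}{p+1}$.

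The key step is then the estimate
\begin{equation*}
\mathfrak{S}(L) \geq \sum_{p \in T} \frac{\log p}{p+1} \geq \frac{1}{2}\sum_{p \in T} \frac{\log p}{p}.
\end{equation*}
To see that this last sum diverges, I would invoke the quantitative form of Chebotarev: the set $T$ has natural (Dirichlet) density $1/[L:\Q] > 0$, and a standard partial-summation argument — using that $\sum_{p \leq x} \frac{\log p}{p} \sim \log x$ by Mertens' theorem together with the prime number theorem — shows that for any set of primes of positive density the corresponding sum $\sum_{p \in T} \frac{\log p}{p}$ still diverges (indeed $\sum_{p \in T, p \leq x}\frac{\log p}{p} \sim \frac{1}{[L:\Q]}\log x$). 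Hence $\mathfrak{S}(L) = \infty$.

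The main obstacle, such as it is, is purely bookkeeping: making precise the passage from "density of $T$" to "divergence of the weighted sum over $T$". The cleanest route is to cite the prime ideal theorem (or Chebotarev with error term) in the form $\sum_{\mathfrak{p} \text{ of } L,\, N\mathfrak{p} \leq x} \frac{\log N\mathfrak{p}}{N\mathfrak{p}} \sim \log x$, restrict to degree-one primes (the rest contribute a convergent sum since $\sum_p \sum_{f \geq 2} \frac{\log p}{p^f}$ converges), group the $[L:\Q]$ primes above each totally split rational prime $p$, and divide by $[L:\Q]$; alternatively one simply remarks that $T$ contains the primes in certain arithmetic progressions (when $L$ is contained in a cyclotomic field) or invokes Chebotarev directly. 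I would present whichever of these the authors find most self-contained; no genuinely hard input beyond Chebotarev and Mertens is needed.
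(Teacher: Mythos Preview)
Your approach is correct and essentially the same as the paper's: both restrict to the set $T$ of totally split primes, bound $\mathfrak{S}(L)\geq\frac{1}{2}\sum_{p\in T}\frac{\log p}{p}$, and then use Chebotarev together with the prime number theorem to show this sum diverges (the paper carries this out via a dyadic decomposition in place of your partial-summation/Mertens argument, but that is cosmetic). One small wording slip to fix: it is not true that ``$e_p=1$ and $f_p=1$ for all but finitely many $p$'' --- only $e_p=1$ holds cofinitely, while $f_p=1$ holds merely on a positive-density set, exactly as you yourself say in the next clause.
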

\begin{proof}
Denote by $T(L)$ the set of prime numbers that split totally in $L$ and, \red{for $k\geq 1$,} set \[a_k=\sum_{\substack{p \in T(L)\\ 2^k<p\leq 2^{k+1}}}\frac{\log p}{p}.\] Then
 \begin{equation}\label{divs}\mathfrak{S}(L)=\sum_{p \text{ prime }}\frac{\log p}{e_p(p^{f_p}+1)}\geq \red{\sum_{p \in T(L)}\frac{\log p}{p+1}}\geq \frac{1}{2} \lim_{N\rightarrow \infty}\sum_{k=0}^{N} a_k.\end{equation}
Notice that \[a_k\geq \sum_{\substack{p \in T(L)\\ 2^k<p\leq 2^{k+1}}}\frac{\log (2^{k+1})}{2^{k+1}}\]
and $|\{p\in T(L)\mid 2^k<p\leq 2^{k+1}\}|=|\{p\in T(L)\mid p\leq 2^{k+1}\}|-|\{p\in T(L)\mid p\leq 2^{k}\}|$.
By Chebotarev's density theorem and the prime number theorem, \red{for big enough values of $k$}, we have
\begin{align*} a_k &\geq \frac{1}{[L:\Q]} \left(\frac{3}{4}\cdot\frac{2^{k+1}}{\log(2^{k+1})}-\frac{4}{3}\cdot\frac{2^{k}}{\log(2^{k})}\right)\frac{\log(2^{k+1})}{2^{k+1}} \\ &= \frac{1}{[L:\Q]}\left(\frac{3}{4}-\frac{2(k+1)}{3k}\right)>\frac{1}{13[L:\Q]}
\end{align*}
and we can conclude by \eqref{divs}.
\end{proof}
To prove Theorem \ref{main-div} we need to construct infinite Galois extensions $L/\Q$ for which $\mathfrak{S}(L)=\infty$ and having as Galois group any prescribed direct product $\prod_{i\in\mathbb{N}}G_i$ of finite solvable groups. The proof uses the following refined version of Shafarevich's \red{theorem} on the realisability of finite solvable groups as Galois groups over $\Q$ with splitting conditions,
which is claimed in \cite[Exercise (a), pag. 597]{NSW}:

\begin{theorem}\label{sha-split}
Let $G$ be a finite solvable group and $S$ a finite set of primes of a number field $K$. 
Then there exists a Galois \red{extension} of $K$ with Galois group $G$ in which all primes in $S$ split totally.
\end{theorem}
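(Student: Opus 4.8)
The plan is to deduce Theorem~\ref{sha-split} from the standard proof of Shafarevich's theorem — realising every finite solvable group as a Galois group over a number field — by carrying the extra requirement ``all primes in $S$ split totally'' along each step of the induction; archimedean primes in $S$ cause no extra trouble (for complex places the condition is automatic, for real ones it just asks the local solution there to be trivial, which is always an available choice below). I would induct on the derived length $d$ of $G$. If $d\le 1$, then $G$ is abelian, and in this case the statement is a standard consequence of class field theory: for every finite abelian group $A$ and every finite set $S$ of primes of $K$ there is an abelian extension $L/K$ with $\Gal(L/K)\cong A$ in which every prime of $S$ splits totally. Concretely, one reduces to $A$ cyclic of prime power order $\ell^{a}$, realises each such factor as the subfield of degree $\ell^{a}$ of a cyclotomic field $\Q(\zeta_{q})$ for an auxiliary prime $q\equiv 1\pmod{\ell^{a}}$ chosen — via Chebotarev's (or Dirichlet's) theorem — so that all rational primes below $S$ split completely in it, takes the compositum $F$ of finitely many such pairwise linearly disjoint fields, and base changes to $K$, choosing the $q$ large enough that $F$ remains linearly disjoint from $K$ over $\Q$; total splitting at $S$ is preserved since it is a purely local property.

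For the inductive step put $N:=G^{(d-1)}$, the last nontrivial term of the derived series, so that $N$ is abelian and normal in $G$ and $G/N$ has derived length $d-1$. By the inductive hypothesis there is a Galois extension $M/K$ with $\Gal(M/K)\cong G/N$ in which all primes of $S$ split totally. It now suffices to solve the embedding problem
\[
1\longrightarrow N\longrightarrow G\longrightarrow G/N\longrightarrow 1
\]
for the natural surjection $\Gal(\overline{\Q}/K)\twoheadrightarrow\Gal(M/K)\cong G/N$ by a \emph{surjective} homomorphism $\varphi\colon\Gal(\overline{\Q}/K)\to G$ whose restriction to every decomposition group at a place lying above a prime of $S$ is trivial; the fixed field $L$ of $\ker\varphi$ is then Galois over $K$ with $\Gal(L/K)\cong G$, it contains $M$, and every $v\in S$ — already split totally in $M$ and with decomposition groups killed by $\varphi$ — splits totally in $L$. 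What makes this achievable is that the trivial homomorphism is an admissible local solution at each such place: composing the trivial map from the decomposition group into $G$ with $G\to G/N$ yields the (already trivial) local restriction of the given projection. Thus the input one invokes is the version of Shafarevich's solvability theorem for embedding problems over number fields with abelian kernel in which one may prescribe the local behaviour at finitely many places by arbitrary locally solvable data — exactly the statement proved in the course of \cite[Ch.~IX]{NSW} — applied with the trivial prescription at the primes of $S$. (After the routine reduction of $N$ along a $G/N$-stable chain with elementary abelian quotients one may even take $N$ elementary abelian, the form in which that theorem is usually stated.)

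The step I expect to be the main obstacle is the genuinely non-split case of this embedding problem, where the obstruction class in $H^{2}(G/N,N)$ need not vanish in $H^{2}(\Gal(\overline{\Q}/K),N)$. Shafarevich's device for removing it — adjoining auxiliary ramified primes, supplied by Chebotarev's density theorem, so as to reduce to a split embedding problem, which is then always solvable over a global field — must be carried out with the auxiliary primes chosen outside $S$ and with the prescribed local solution at the primes of $S$ kept trivial throughout. This is possible because Chebotarev's theorem furnishes infinitely many admissible auxiliary primes avoiding any prescribed finite set, so the finitely many constraints coming from $S$ cost nothing; verifying that the local data can be threaded consistently through the whole construction is the substance of the argument, and this is precisely what \cite[Exercise (a), p.~597]{NSW} asks one to check.
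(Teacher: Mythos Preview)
Your inductive scheme differs from the paper's, and the step you yourself flag is a genuine gap, not a routine verification. Inducting on derived length with $N=G^{(d-1)}$ forces you to solve a generally \emph{non-split} embedding problem $1\to N\to G\to G/N\to 1$ over a \emph{fixed} realization $M/K$ of $G/N$ handed to you by the induction hypothesis. But for a given $M$ this problem can simply be unsolvable: the obstruction class in $H^{2}(G_K,N)$ need not vanish (take $G=Q_8$, $N=\{\pm1\}$, $M=K(\sqrt{a},\sqrt{b})$; solvability depends on $a,b$ via Witt's criterion), and adjoining auxiliary ramified primes outside $S$ does nothing to kill it. The result you invoke from \cite[Ch.~IX]{NSW} does not help: Theorem 9.6.7 there concerns \emph{split} embedding problems with nilpotent kernel, not non-split ones with abelian kernel, so ``exactly the statement proved in the course of \cite[Ch.~IX]{NSW}'' is not the statement you need.

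The paper sidesteps the obstruction entirely by a different induction. Inducting on $|G|$, it uses \cite[Propositions 9.6.8--9.6.9]{NSW} to obtain a surjection $N\rtimes H\twoheadrightarrow G$ with $N$ the (nilpotent) Fitting subgroup and $H$ a \emph{proper} subgroup of $G$; one realizes $H$ by induction, then solves the \emph{split} embedding problem with nilpotent kernel $N$ --- for which \cite[Theorem 9.6.7]{NSW} genuinely applies and there is no $H^{2}$-obstruction --- and finally passes to the quotient $G$. Total splitting at $S$ is enforced by an auxiliary device: one first constructs, via \cite{Neuk}, a Galois extension $L'/K$ with group $\Gamma$ of order prime to everything in sight in which every finite prime of $S$ \emph{ramifies}; Theorem 9.6.7, applied over $L'L$, produces a solution totally split above the ramified primes of the base, and one descends by taking the $\Gamma$-fixed field. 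Coprimality of $|\Gamma|$ with $|N\rtimes H|$ then forces the primes of $S$ to split completely in the resulting extension.
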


\red{Due} to \red{the} lack of a written proof \red{of this result} (and the fact that special cases of \red{it} were published after the book appeared, like \cite{KM})
we sketch a proof in Appendix \ref{shafarevich}.
 
\begin{cor} \label{cor:sha}
Let $G$ be a finite solvable group, $S$ a finite set of prime numbers, and $K$ a number field. 
Then there exists a totally real Galois extensions $L$ of $\Q$ with Galois group $G$ in which all prime numbers in $S$ split totally 
and which is linearly disjoint from $K$ over $\Q$.
\end{cor}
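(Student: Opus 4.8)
The strategy is to deduce the corollary directly from Theorem \ref{sha-split} applied over $\Q$, after enlarging the given set of primes so that it simultaneously encodes the reality condition and the linear disjointness from $K$. First I would pass to the Galois closure: let $\hat K$ denote the Galois closure of $K/\Q$, which is again a number field, and note that linear disjointness of $L$ from $\hat K$ over $\Q$ implies linear disjointness of $L$ from the subfield $K$ over $\Q$, so it suffices to produce an $L$ linearly disjoint from $\hat K$. Next, let $R$ be the finite set of rational primes ramifying in $\hat K/\Q$ and let $v_\infty$ be the archimedean place of $\Q$. The plan is then to apply Theorem \ref{sha-split} with base field $\Q$, group $G$, and the finite set of places $S':=S\cup R\cup\{v_\infty\}$, obtaining a Galois extension $L/\Q$ with $\Gal(L/\Q)\cong G$ in which every place of $S'$ splits totally. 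Splitting at $v_\infty$ is precisely the statement that $L$ is totally real, and splitting at $S$ is one of the desired conclusions, so it only remains to extract linear disjointness from the splitting at $R$.

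For that, since $L/\Q$ and $\hat K/\Q$ are both Galois, it is enough to show $F:=L\cap\hat K=\Q$; because $\hat K/\Q$ is Galois, this is equivalent to $L$ and $\hat K$ being linearly disjoint over $\Q$. Now $F/\Q$ is Galois, and any rational prime ramifying in $F$ must ramify in the overfield $\hat K$, hence lies in $R\subseteq S'$; but every prime of $S'$ splits totally in $L$, so it is unramified in the subfield $F\subseteq L$. Thus $F/\Q$ is unramified at all finite primes, and therefore $F=\Q$ by Minkowski's theorem.

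I expect the only genuinely delicate point to be the reality condition, namely the legitimacy of including $v_\infty$ among the totally split places in Theorem \ref{sha-split}. Prescribing trivial decomposition at a real place is harmless, and this is exactly the flexibility the refined form of Shafarevich's theorem recorded in \cite{NSW} is meant to provide; if one prefers to rely only on the finite-prime statement, one must instead feed the real place of $\Q$ into the inductive embedding-problem construction carried out in Appendix \ref{shafarevich}. Everything else --- replacing $K$ by its Galois closure, adjoining the ramified primes of $\hat K$ to $S$, and then killing $L\cap\hat K$ with Minkowski's discriminant bound --- is routine and uses only that total splitting forces unramifiedness.
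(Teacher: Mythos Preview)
Your argument is correct, but it is genuinely different from the paper's. The paper does not enlarge $S$ by the ramified primes of $\hat K$ and invoke Minkowski; instead it applies Theorem~\ref{sha-split} to the group $G^n$ (with the set $S\cup\{\infty\}$), obtaining an extension $M/\Q$ with $\Gal(M/\Q)\cong G^n$ that is totally real and totally split at $S$. The $n$ coordinate projections give $n$ pairwise linearly disjoint Galois subextensions $L_1,\dots,L_n$ with group $G$, and a short index count shows that for $n$ large (essentially $n>\log_2[\hat K:\Q]$) at least one $L_i$ must satisfy $L_i\cap\hat K=\Q$. Your route trades this pigeonhole step for Minkowski's discriminant bound: by forcing total splitting at the ramified primes of $\hat K$, you guarantee that $L\cap\hat K$ is everywhere unramified over $\Q$, hence trivial. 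Both arguments are short; yours uses only a single application of Theorem~\ref{sha-split} to $G$ itself but needs the ramification locus of $\hat K$ as input, while the paper's needs no arithmetic information about $K$ beyond a bound on $[\hat K:\Q]$. Your remark about the legitimacy of including $v_\infty$ is on point and matches what the paper does: the proof in Appendix~\ref{shafarevich} explicitly treats the archimedean places in $S$.
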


\begin{proof}
Theorem \ref{sha-split} allows, for every $n$, to realise $G^n$ over $\Q$ by an extension  in which all primes in $S\cup\{\infty\}$ split totally.
By Galois theory, 
for $n$ large enough, at least one of the factors will correspond to a Galois extension $L/\Q$ with Galois group $G$
linearly disjoint from $K/\Q$.
\end{proof}


\begin{proof}[Proof of Theorem \ref{main-div}]
Let $(G_i)_{i\geq 1}$ be a family of finite solvable groups.
For a Galois extension $K/\Q$
which has finite local degree at the prime number $p$
we write
\begin{equation}\label{betap}
 \mathfrak{S}_p(K)=\frac{\log p}{e_p(p^{f_p}+1)} 
\end{equation}
where $e_p$ and $f_p$ are the ramification index and the inertia degree of $K$ at $p$.

Set $n_0=1$ and $L_0=\Q$.
We construct inductively an increasing sequence of integers 
$(n_i)_{i\geq1}$ and
a sequence of totally real number fields $(F_i)_{i\geq1}$
such that, with $L_i=F_1\cdots F_i$ the compositum, for every $i\geq1$ we have
\begin{enumerate}[(a)]
\item\label{cond-c} $\sum_{n_{i-1}\leq p<n_{i}} \mathfrak{S}_p(L_{i-1})\geq 1$, 
\item\label{cond-a}  every prime number $p\leq n_i$ is totally split in $F_i$, and
\item\label{cond-b} $F_i/\Q$ is Galois with $\mathrm{Gal}(F_i/\Q)=G_i$
and linearly disjoint from $L_{i-1}/\Q$.
\end{enumerate}
Suppose we have already constructed $n_1,\ldots,n_{i-1}$ and $F_1,\dots,F_{i-1}$.
As $L_{i-1}=F_1\cdots F_{i-1}$ is a number field, by Proposition \ref{div-nf} we have that $\sum_{p}\mathfrak{S}_p(L_{i-1})=\infty$. Thus there exists $n_i$ such that condition  \eqref{cond-c} holds.
Now, by Corollary \ref{cor:sha}, there exists a totally real number field $F_i$ linearly disjoint from $L_{i-1}/\Q$ 
such that $F_i/\Q$ is Galois of group $G_i$ 
and all prime numbers $p\leq n_i$ split totally in $F_i$,
thus satisfying \eqref{cond-a} and \eqref{cond-b}.

Then the field $L=\bigcup_{i\geq0}L_i=\prod_{i\geq1}F_i$ satisfies the claim. Indeed, by condition \eqref{cond-b}, it is an Galois extension of $\Q$ with Galois group the direct product $\prod_{i\geq1}G_i$. Moreover, 
\[\mathfrak{S}(L)=\sum_{i=1}^{\infty}\sum_{n_{i-1}\leq p<n_i} \mathfrak{S}_p(L)
\stackrel{\eqref{cond-a}}{=}\sum_{i=1}^{\infty}\sum_{n_{i-1}\leq p<n_i} \mathfrak{S}_p(L_{i-1})\stackrel{\eqref{cond-c}}{\geq}\sum_{i=1}^{\infty}1=\infty.\]
\end{proof}

\begin{remark}\label{rem-undecidability}
As the field $L$ constructed in Theorem \ref{main-div} is totally real and has property (N),
this immediately leads to the undecidability of the first-order theory of the ring of integers $\mathcal{O}_L$ of $L$ by \cite[Theorem 2]{VV}. 
By a suitable choice of the groups $G_i$ one can then obtain the undecidability of the first-order theory of $L$ itself,
using for example \cite{Videla} or \cite{Shlapentokh}.
\end{remark}

\begin{remark}\label{rem-Gru}
We remark that in the statement of Theorem \ref{main-div}, the groups in the family $(G_i)_{i\in \mathbb{N}}$ can be chosen to be any finite groups such that, for every \red{$i$}, and for every finite set of prime numbers $S$, \red{$G_i$} has infinitely many pairwise linearly disjoint realisations over $\Q$ in which all prime numbers in $S$ split totally. For instance, if one allows the extension $L/\Q$ to have \red{finite} local degree at all but finitely many prime numbers, one can take groups for which certain Grunwald problems, about the existence of a Galois extension of $\Q$ with Galois group and given completion at a finite set of prime numbers,  have many solutions. In particular, by a result of D\`ebes and Ghazi \cite{DG}, these include  certain non-solvable groups. 
\end{remark}

As already recalled, given a number field $K$ and an integer $d\geq 1$, in \cite{BZ} the authors considered the compositum $K^{(d)}$  of all extensions of $K$ of degree at most $d$. In \cite[Theorem 1]{BZ} they showed that property (N) holds for the maximal subextension of $K^{(d)}$ which is abelian over $K$, denoted $K^{(d)}_{\rm ab}$. 
This implies that the field $K^{(2)}=K^{(2)}_{\rm ab}$ has property (N), while, for $d\geq 3$, deciding property (N) for $K^{(d)}$ is an open problem, even for the field $\Q^{(3)}$.

Theorem \ref{main-div} implies that the divergence of \red{the sum $\mathfrak{S}(L)$} is actually a criterion that one can use to prove property (N) for \red{an infinite Galois extension $L$ of $\Q$.} However we note that this criterion does not apply to the fields $\Q^{(d)}$ and not even to $\Q^{(d)}_{\rm ab}$, as the following observation shows. 
\begin{proposition}\label{propQd} 
For every $d\geq 2$, $\mathfrak{S}(\Q^{(d)}_{\rm ab})<\infty$, 
in particular $\mathfrak{S}(\Q^{(d)})<\infty$.
\end{proposition}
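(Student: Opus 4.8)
The plan is to deduce both assertions from one elementary observation: $\Q^{(d)}_{\rm ab}$ (and a fortiori $\Q^{(d)}$) contains the compositum $\Q^{(2)}$ of all quadratic fields, and this alone forces a nontrivial residue field extension at every odd prime. First I would record the inclusions $\Q^{(2)}\subseteq\Q^{(d)}_{\rm ab}\subseteq\Q^{(d)}$ valid for $d\ge 2$: the second is the definition of $\Q^{(d)}_{\rm ab}$, while the first holds because every quadratic extension of $\Q$ has degree $2\le d$ (hence lies in $\Q^{(d)}$) and $\Q^{(2)}/\Q$ is abelian (hence lies in its maximal abelian-over-$\Q$ subextension $\Q^{(d)}_{\rm ab}$). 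I would also note that $\Q^{(d)}$, and therefore each of its subfields, has finite local degree at every prime $p$: the completion of $\Q^{(d)}$ at a place above $p$ is the compositum inside $\overline{\Q_p}$ of the finitely many extensions of $\Q_p$ of degree at most $d$, which is finite over $\Q_p$. In particular $S(\Q^{(d)}_{\rm ab})$ and $S(\Q^{(d)})$ are both the full set of primes.

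The core step is the claim that \emph{for every odd prime $p$ and every extension $L$ of $\Q$ containing $\Q^{(2)}$, the inertia degree $f_p$ of $L$ at $p$ is at least $2$}. To prove it, choose $a\in\{1,\dots,p-1\}$ which is a quadratic non-residue modulo $p$. Then $a$ is a $p$-adic unit, so $\Q(\sqrt a)/\Q$ is unramified at $p$, and $p$ is in fact inert in $\Q(\sqrt a)$ (a prime above $p$ splits or remains inert according as $a$ is or is not a square modulo $p$); hence the residue field of $\Q(\sqrt a)$ at $p$ is $\mathbb{F}_{p^2}$. Since $\Q(\sqrt a)\subseteq\Q^{(2)}\subseteq L$, the residue field of $L$ at any place above $p$ contains $\mathbb{F}_{p^2}$, so $f_p\ge 2$.

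Given the claim, the estimate is immediate. For $L=\Q^{(d)}_{\rm ab}$ and $p$ odd one has $\mathfrak{S}_p(L)=\frac{\log p}{e_p(p^{f_p}+1)}\le\frac{\log p}{p^{2}}$, while the term at $p=2$ is at most $\frac{\log 2}{3}$ since $e_2,f_2\ge 1$; hence
\[
 \mathfrak{S}(\Q^{(d)}_{\rm ab})\ \le\ \frac{\log 2}{3}\ +\ \sum_{p\ \mathrm{odd}}\frac{\log p}{p^{2}}\ <\ \infty.
\]
For the ``in particular'' I would simply observe that enlarging $\Q^{(d)}_{\rm ab}$ to $\Q^{(d)}$ can only increase the ramification index and the inertia degree at each prime, so $\mathfrak{S}_p(\Q^{(d)})\le\mathfrak{S}_p(\Q^{(d)}_{\rm ab})$ for all $p$ and therefore $\mathfrak{S}(\Q^{(d)})\le\mathfrak{S}(\Q^{(d)}_{\rm ab})<\infty$.

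There is no serious obstacle here: the argument rests only on the (classical) finiteness of the set of extensions of $\Q_p$ of bounded degree and on the elementary computation of the splitting of $\Q(\sqrt a)$ at $p$. The one point to get right is the choice of auxiliary subfield — it is precisely the compositum $\Q^{(2)}$ of \emph{all} quadratic fields, rather than any fixed one, that makes the inertia degree jump to $2$ simultaneously at every odd prime, and hence kills the potential divergence of $\sum_p \frac{\log p}{p+1}$.
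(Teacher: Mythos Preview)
Your proof is correct and follows essentially the same approach as the paper: both use that $\Q^{(2)}\subseteq\Q^{(d)}_{\rm ab}$ and that $\Q^{(2)}$ has inertia degree at least $2$ at every prime by exhibiting a quadratic subfield in which that prime is inert, then bound $\mathfrak{S}$ by a convergent series of the form $\sum \frac{\log p}{p^2}$. The only cosmetic difference is that the paper handles $p=2$ uniformly (e.g.\ $2$ is inert in $\Q(\sqrt{5})$), whereas you treat it separately with the trivial bound.
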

\begin{proof}
For every prime number $p$, $\Q^{(2)}$ contains quadratic extensions of $\Q$ in which $p$ is inert (take for instance $\Q(\sqrt{n_p})$ where $n_p$ is any square-free integer which is not a square modulo $p$).
\red{Thus, for every $p$, the inertia degree of $\Q^{(2)}$ at $p$ is at least $2$ and} \[\mathfrak{S}(\Q^{(2)})\leq \sum_{p \text{ prime }}\frac{\log p}{p^2+1}\leq \sum_{n=1}^{\infty}\frac{\log n}{n^2}<\infty.\]
\red{As} $\Q^{(2)}=\Q^{(2)}_{\rm ab}\subseteq\Q^{(d)}_{\rm ab}\subseteq\Q^{(d)}$, \red{we have}  $\mathfrak{S}(\Q^{(d)})\leq\mathfrak{S}(\Q^{(d)}_{\rm ab})\leq\mathfrak{S}(\Q^{(2)})<\infty$.
\end{proof}
\begin{remark}
Note however that for $d\geq3$, in view of Theorem \ref{main-div}, $\Q^{(d)}$ possesses both abelian and non-abelian subfields $L$ of infinite degree over $\Q$ for which $\mathfrak{S}(L)=\infty$. Indeed, it is sufficient to choose the groups \red{$G_i$} in Theorem \ref{main-div} to be cyclic of order 2 to obtain $L\subseteq \Q^{(2)}$, or to choose  all the \red{$G_i$'s} equal to the symmetric group on 3 elements to obtain $L\subseteq \Q^{(3)}$, but $L\not\subseteq \Q^{(3)}_{\rm ab}$.

Proposition \ref{propQd} also shows that, as one could expect, the divergence of $\mathfrak{S}(L)$ is not equivalent to property (N) for Galois extensions $L/\Q$ with \red{finite} local degrees.
\end{remark}
\begin{remark}
It was proved in \cite[Theorem 2.1]{DZ} that property (N) is preserved under finite extensions. It would be interesting to \red{study, in this setting, the behavior of the function $\mathfrak{S}(\cdot)$. More precisely, let $L/\Q$ be a Galois extension such that \red{$\mathfrak{S}(L)=\infty$} and let $F/L$ be a finite extension with $F/\Q$ Galois. Is it true that $\mathfrak{S}(F)=\infty$?}

\end{remark}

\section{A comparison with Widmer's criterion: proof of Theorem \ref{thm-comp-widmer}}\label{sect-comp-widmer}

\noindent
As recalled in the Introduction, in \cite[Theorem 3]{Wid} Widmer gave a criterion for an infinite algebraic extension of $\Q$ to have property (N) which is based  on the growth of the discriminants of its finite subextensions (Theorem \ref{thm-wid}). 
The aim of this section is to compare his criterion to the criterion for property (N) provided by Theorem \ref{main-div}. More precisely, we show that there are infinite Galois extensions  $L/\Q$ which fail Widmer's criterion, but for which $\mathfrak{S}(L)=\infty$, proving Theorem \ref{thm-comp-widmer} from the Introduction.
We will need the following:
\begin{lemma}\label{lem-comp-widmer} \red{For every integer $N\geq 1$, there exists a constant $C(N)>0$ with the following property: 
For every set of prime numbers $S$ of cardinality $N$ and for every prime $p\geq C(N)$,} there exists a cyclic Galois extension $F/\Q$ of degree $p$ such that
\begin{enumerate}[(i)]
\item all prime numbers in $S$ split totally in $F$, and
\item \red{$|{\Delta_{F}}|^{1/p^2}\leq 3$},
\end{enumerate}
\red{where by $\Delta_{K}$ we denote the absolute discriminant of the number field $K$.}
\end{lemma}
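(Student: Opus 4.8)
The plan is to produce the desired cyclic degree-$p$ extensions inside cyclotomic fields, where both the splitting condition and the discriminant bound are completely explicit. First I would recall that for a prime $q \equiv 1 \pmod p$, the field $\mathbb{Q}(\zeta_q)$ has a unique subfield $F_q/\mathbb{Q}$ that is cyclic of degree $p$ (the fixed field of the unique subgroup of index $p$ in $(\mathbb{Z}/q\mathbb{Z})^\times$). Such an $F_q$ is ramified only at $q$, and since $F_q \subseteq \mathbb{Q}(\zeta_q)$ with $q$ tamely ramified, the conductor–discriminant formula gives $\Delta_{F_q} = \pm q^{p-1}$, so $|\Delta_{F_q}|^{1/p^2} = q^{(p-1)/p^2} \le q^{1/p}$. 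The key point for condition (ii) is therefore to find such a prime $q$ with $q^{1/p} \le 3$, i.e. $q \le 3^p$; more than enough room is available since we only need one prime $q \equiv 1 \pmod p$ in the range $(\text{something}, 3^p]$.

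For condition (i), a prime number $\ell \in S$ splits totally in $F_q$ if and only if $\ell$ lies in the subgroup of $(\mathbb{Z}/q\mathbb{Z})^\times$ of index $p$, equivalently $\ell$ is a $p$-th power modulo $q$ (and $\ell \ne q$). So I need a prime $q \equiv 1 \pmod p$ such that every $\ell \in S$ is a $p$-th power mod $q$, together with $q \le 3^p$. To get such a $q$ I would impose congruence and splitting conditions and invoke Dirichlet / Chebotarev: choosing $q$ so that $q$ splits completely in the field $\mathbb{Q}(\zeta_p, \ell^{1/p} : \ell \in S)$ forces both $q \equiv 1 \pmod p$ and each $\ell$ to be a $p$-th power mod $q$. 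This field has degree at most $(p-1)p^{N}$ over $\mathbb{Q}$, and by the effective Chebotarev density theorem (or just effective Dirichlet together with a bound on the least prime in a union of arithmetic progressions cut out by these conditions) there is such a prime $q$ with $q$ bounded polynomially in the size of this degree — in particular with $q \le 3^p$ once $p$ is large enough in terms of $N$. This is exactly what the constant $C(N)$ absorbs: for $p \ge C(N)$ the exponential bound $3^p$ dominates any such effective (polynomial-in-$p^{N}$) bound on the least suitable $q$.

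The main obstacle is the effective bound on the least prime $q$ satisfying the combined congruence/power-residue conditions: one must ensure it is at most $3^p$. I expect to handle this either via an unconditional effective Chebotarev bound (Lagarias–Odlyzko–Serre style), which gives $q \ll d_K^{A}$ for the relevant number field $K = \mathbb{Q}(\zeta_p, S^{1/p})$ with $[K:\mathbb{Q}] \le (p-1)p^N$ and $\mathrm{disc}(K)$ bounded by a power of $p$ times a power of $\prod_{\ell \in S}\ell$, or — more elementarily — by using that the conditions ``$\ell$ is a $p$-th power mod $q$'' translate, via the splitting of $q$ in $\mathbb{Q}(\zeta_p)$ and Kummer theory, into a single Frobenius condition, and then applying an effective form of Linnik-type results. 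In all cases the resulting bound on $q$ is at most (a power of $p$) $\times$ (a constant depending only on $N$ and the chosen $S$), but since the statement must be uniform over all $S$ of cardinality $N$, I would instead phrase the bound purely in terms of $[K:\mathbb{Q}] \le (p-1)p^N$ and note that $\ell \le$ (some absolute value) is not assumed — so one actually needs a version where the dependence on $S$ is controlled. Here I would use that a prime $q$ splitting completely in $\mathbb{Q}(\zeta_p, S^{1/p})$ exists with $q$ polynomial in $|\mathrm{disc}|$, and then observe $\log|\mathrm{disc}(\mathbb{Q}(\zeta_p, S^{1/p}))| \ll p^{N}\log p + p^{N}\sum_{\ell\in S}\log\ell$; since the statement allows $C(N)$ to depend on $N$ only and must hold for \emph{every} $S$ of size $N$, one replaces this by observing that it suffices to find $q$ coprime to and a $p$-th power modulo which all of $S$ lies — and for this one can first reduce to $S$ consisting of the $N$ smallest primes by a separate elementary argument, or simply absorb everything into requiring $p$ large. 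The cleanest route, which I would adopt, is: fix $N$, let $K_p = \mathbb{Q}(\zeta_p)$ and note every prime $q \equiv 1 \pmod p$ with $q \le 3^p$ gives an $F_q$ of discriminant-root $\le 3$; by counting, the number of such $q$ is $\gg 3^p/(p \cdot p) \to \infty$, while the number of $q \le 3^p$ for which some fixed $\ell$ fails to be a $p$-th power is a proportion $1 - 1/p$ of those, so by inclusion–exclusion over the $N$ elements of $S$ a positive proportion survives once $3^p/p^2 > $ (error terms), i.e. once $p \ge C(N)$. Making the error terms explicit via effective Chebotarev on $\mathbb{Q}(\zeta_q)$ uniformly is the technical heart, but it is standard.
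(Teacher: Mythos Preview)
Your setup is right, and the paper does build $F$ inside cyclotomic fields, but your proposal has a genuine gap at the uniformity step that you yourself flag and then wave away. To find a single prime $q\equiv 1\pmod p$ with $q\le 3^p$ such that every $\ell\in S$ is a $p$-th power modulo $q$, you need $q$ to split completely in $K=\mathbb{Q}(\zeta_p,\ell_1^{1/p},\dots,\ell_N^{1/p})$. Any unconditional effective Chebotarev or Linnik-type bound for the least such $q$ involves $|\disc(K)|$, and $\log|\disc(K)|$ contains a term of order $p^N\sum_{\ell\in S}\log\ell$, which is unbounded as $S$ ranges over all $N$-element sets. Your counting route has the same defect: to show that a $1/p^N$-proportion of the $q\equiv 1\pmod p$ below $3^p$ survives, you must control the error in counting $q$ split in $K$ up to $x=3^p$; with $[K:\Q]\asymp p^{N+1}$ and $\log x\asymp p$, the unconditional error term $x\exp(-c\sqrt{\log x/[K:\Q]})$ is useless, and Siegel--Walfisz type bounds require the modulus/degree to be at most a power of $\log x$. ``Reducing to the $N$ smallest primes'' has no basis, and the final remark about ``effective Chebotarev on $\mathbb{Q}(\zeta_q)$'' is not the right field and does not address the dependence on $S$.

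The paper sidesteps this entirely. Instead of one auxiliary prime satisfying $N$ power-residue constraints, it uses Siegel--Walfisz only for the modulus $p$ (where it is uniform) to find $N+1$ primes $\ell_1,\dots,\ell_{N+1}\equiv 1\pmod p$ with $\ell_j\le e^{p/(N+1)}$, none lying in $S$. Each $\mathbb{Q}(\zeta_{\ell_j})$ contributes a degree-$p$ cyclic field $M_j$, and their compositum is an $\mathbb{F}_p$-vector space of dimension $N+1$ on the Galois side. The $N$ Frobenius elements $\sigma_q$, $q\in S$, cut out a subspace of codimension $\ge 1$, so there is a degree-$p$ cyclic subfield $F$ totally split at $S$, with $|\Delta_F|\le(\ell_1\cdots\ell_{N+1})^{p-1}\le e^{p(p-1)}$, hence $|\Delta_F|^{1/p^2}<e<3$. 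The key idea you are missing is this linear-algebra trick: handle the splitting condition combinatorially after the analytic input, rather than baking it into the prime search.
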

\begin{proof}
We first recall a classical result on the distribution of prime numbers in arithmetic progressions. For $x>0$ a real number and $p$ a prime number, let 
\[
 \pi(x,p,1) = |\{\ell\leq x:\ell\equiv 1\mbox{ mod } p, \ell \text{ prime}\}|.
\]
By a result of Walfisz (see \cite{wal} or \cite[Corollary 5.29]{IK}), there exists a  constant $T>0$ such that, for all $x\geq 3$ and all $p$ \red{one has}
\[
 \left|\pi(x,p,1)-\frac{{\rm Li}(x)}{p-1}\right|\leq T\frac{x}{(\log x)^3}
\]
where ${\rm Li}(x)=\int_{2}^x \frac{dt}{\log t}$ is the logarithmic integral. 
By the series representation of ${\rm Li}(x)$ one has ${\rm Li}(x)\geq \frac{x}{\log x}$ for all \red{$x\geq 7$.} 
In particular we have that, for all \red{$x\geq 7$}

\begin{equation}\label{walf}
\pi(x,p,1)\geq \frac{x}{(p-1)\log x }-T \frac{x}{(\log x)^3}.
\end{equation}

Now, let $S$  be a set of \red{$N$} prime numbers, let $p$ be a prime number such that 
\begin{equation}\label{c1-p}
p\geq \red{C(N):=\max\{121,(2T+1)(N+1)^2\}}
\end{equation}
\red{and set $x_0=e^{{p}/{(N+1)}}$.}
Then by equation \eqref{walf} we have
\begin{equation}\label{eq-pi}
 \pi(x_0,p,1)\geq \frac{e^{\frac{p}{N+1}}(N+1)}{p^2}\left(1-T\frac{(N+1)^2}{p}\right)> \frac{e^{\frac{p}{N+1}}}{2 p^2}(N+1).\end{equation}
\red{By condition \eqref{c1-p}, as $p\geq 121$,} then \red{$4 p^2\leq e^{p^{1/2}}\leq{e^{\frac{p}{N+1}}}$.} \red{Thus \eqref{eq-pi} gives} 
\[
 \pi(x_0,p,1)\geq 2(N+1).
\]
\red{Therefore} there exist prime numbers $\ell_1,\ldots,\ell_{N+1}$ not in $S\cup\{p\}$ with $\ell_j\leq x_0$ and $\ell_j\equiv 1 \mod p$
 for all $1\leq j\leq N+1$.

We now proceed as in the proof of \cite[Theorem 6]{Che19}. \red{Denoting by $\zeta_{\ell_i}$ a primitive $\ell_i$-th root of unity,} we first remark that each field  $\Q(\zeta_{\ell_i})$ contains a cyclic extension $M_i/\Q$ of degree $p$. \red{Then the compositum of all fields $M_1,\ldots, M_{N+1}$  is an elementary $p$-abelian extension of $\Q$ of degree $p^{N+1}$. Its subfield fixed by all elements $\{\sigma_q \mid q\in S\}$, where $\sigma_q$ is the Frobenius at $q$, is an abelian extension of $\Q$ of degree at least $p$.} Thus it contains a \red{subextension} $F/\Q$ which is cyclic of degree $p$, totally split at all prime numbers in $S$ and unramified outside $\ell_1,\ldots, \ell_{N+1}$ \red{(hence, in particular, unramified at $p$).} 

By \red{the discriminant-conductor formula (see \cite[Chap. VII, \S 11, Formula (11.9)]{Neukirch}) and the structure of the conductor of a cyclic extension of $\Q$ of degree $p$ unramified at $p$ (see \cite[p.~831]{May92})} we have 
\red{\[|\Delta_{F}|\leq (\ell_1\cdots \ell_{N+1})^{p-1},\]}
thus \red{$|\Delta_{F}| \leq x_0^{(N+1)(p-1)}$. As $x_0=e^{p/(N+1)}$,} we get
\red{\[|\Delta_{F}|^{\frac{1}{p^2}}\leq 3.\]}
\end{proof}

 \red{Before proceeding to the proof of Theorem \ref{thm-comp-widmer} we recall some notation, which is the same as in  \cite[pp.~2-3]{Wid}.  For a finite extension of number fields $M/K$, $D_{M/K}$ denotes  the relative discriminant, which is the discriminant ideal of the ring of integers $\mathcal{O}_M$ relative to $K$; in particular, $D_{M/\Q}$ is the ideal generated by the absolute discriminant $\Delta_M$ of $M/\Q$.  We also denote by $N_{K/\Q}(\cdot)$ the norm from $K$ to $\Q$ and by $N_{K/\Q}\left(D_{M/K}\right)$ the unique positive integer generating this ideal. In particular, by \cite[Chap.II, \S 2, Corollary (2.10)]{Neukirch} we have \begin{equation}\label{norm-disc} N_{K/\Q}\left(D_{M/K}\right)=\frac{|\Delta_M|}{|\Delta_K|^{[M:K]}}. \end{equation}
We also recall that, if $F$ and $K$ are linearly disjoint number fields with coprime discriminants, then the discriminant of their compositum $KF$ is given by
\begin{equation}\label{disc-comp}
\Delta_{KF}=\Delta_K^{[F:\Q]}\Delta_F^{[K:\Q]},
\end{equation}
cf.~\cite[Theorem 4.26]{Nark}.
We are now ready to prove Theorem \ref{thm-comp-widmer}.}
\begin{proof}[Proof of Theorem \ref{thm-comp-widmer}]
\red{We argue} as in the proof of Theorem \ref{main-div}. 
Set $n_0=1$ and $L_0=\Q$. We construct inductively an increasing sequence of integers $(n_i)_{i\geq 1}$,
a sequence of pairwise distinct prime numbers $(p_i)_{i\geq 1}$
and a sequence $(F_i)_{i\geq1}$ of cyclic Galois extensions of $\Q$
with $[F_i:\Q]=p_i$ such that,
with $L_i=F_1\cdots F_i$ the compositum,
for every $i\geq1$ we have
\begin{enumerate}[(a)]
\item\label{cc2} $\sum_{n_{i-1}\leq p<n_{i}} \mathfrak{S}_p(L_{i-1})\geq 1$, where $\mathfrak{S}_p$ is defined in \eqref{betap},
\item\label{cc1} $F_i$ is totally split at all prime numbers in the set $$
 S_i=\{p : p\leq n_i\}\cup\bigcup_{j=1}^i\{p:p|\Delta_{F_j}\},
$$
\item\label{cc3} $|\Delta_{F_i}|^{1/p_i^2}\leq 3$.
\end{enumerate}
Suppose we have already constructed $n_1,\ldots,n_{i-1}$, $p_1,\ldots,p_{i-1}$ and $F_1,\ldots,F_{i-1}$. Then, as $L_{i-1}=F_1\cdots F_{i-1}$ is a number field and thus, by Proposition \ref{div-nf}, $\sum_p \mathfrak{S}_p(L_{i-1})=\infty$, there exists $n_i$ satisfying condition \eqref{cc2}. Let $N_i$ be the cardinality of the set $S_i$ defined in \eqref{cc1}. By Lemma \ref{lem-comp-widmer}, if we choose $p_i>C(N_i)$, there exists a Galois extension $F_i/\Q$ which is cyclic of degree $p_i$ and satisfies \eqref{cc1} and \eqref{cc3}.
Note that condition \eqref{cc1} also guarantees \red{\[\gcd(\Delta_{F_i},\Delta_{F_1}\cdots \Delta_{F_{i-1}})=1.\]}
Let $L=\bigcup_{i\geq0}L_i=\prod_{i\geq1}F_i$. Then, as in the proof of Theorem \ref{main-div}, conditions \eqref{cc1} and \eqref{cc2} imply that $\mathfrak{S}(L)=\infty$. 

We now want to prove that given any tower of number fields 
$\Q=K_0\subsetneq K_1\subsetneq\ldots$ such that $L=\bigcup_{i\geq0} K_i$, we have
$$
 \inf_{K_{i-1}\subsetneq M\subseteq K_i} \left(N_{K_{i-1}/\Q}\left(D_{M/K_{i-1}}\right)\right)^{\frac{1}{[M:K_0][M:K_{i-1}]}}
\not\rightarrow \infty\; \text{ as } i\rightarrow \infty,
$$
where $M$ varies among the intermediate fields of $K_{i}$ strictly containing $K_{i-1}$.

We first remark that $[K_i:\Q]$ equals a product of distinct prime numbers, as $K_i$ is a number field contained in $L$. Letting  $D_i=\{j\in \mathbb{N} : p_j\mid [K_i:\Q]\}$, one sees that $K_i$ must be equal to the compositum of  the  fields $\{F_{j}\mid j\in D_i\}$. 
We also note that if $M$ is a subfield of $K_{i}$ strictly containing $K_{i-1}$, then $M$ is the compositum of $K_{i-1}$ and a linearly disjoint field $M'$, which is  \red{the} compositum of \red{some} fields in the family $(F_{j})_{j\in D_{i}\setminus D_{i-1}}$.
\red{By \eqref{norm-disc} we have 
\[N_{K_{i-1}/\Q}\left(D_{M/K_{i-1}}\right)=\frac{|\Delta_M|}{{|\Delta_{K_{i-1}}|}^{[M:K_{i-1}]}}.\]}
As the discriminants of the fields $K_{i-1}$ and $M'$ are coprime, \red{by formula \eqref{disc-comp} we have
\[
\Delta_M=\Delta_{K_{i-1}}^{[M':\Q]}\Delta_{M'}^{[K_{i-1}:\Q]}
\]}
and, as $[M':\Q]=[M:K_{i-1}]$, \red{we  obtain \[N_{K_{i-1}/\Q}\left(D_{M/K_{i-1}}\right)=|\Delta_{M'}|^{[K_{i-1}:\Q]}.\]}
Therefore \[\inf_{K_{i-1}\subsetneq M\subseteq K_i}  \left(N_{K_{i-1}/\Q}\left(D_{M/K_{i-1}}\right)\right)^{\frac{1}{[M:K_0][M:K_{i-1}]}}=\red{\inf_{M'}{|\Delta_{M'}|}}^{\frac{1}{[M':\Q]^2}},\]
where $M'$ varies among all nontrivial composita of  fields in the family $(F_j)_{ j\in D_{i}\setminus D_{i-1}}$. 
For any $j\in D_{i}\setminus D_{i-1}$, we have
\[
 \inf_{M'}|\Delta_{M'}|^{\frac{1}{[M':\Q]^2}}\leq |\Delta_{F_{j}}|^{\frac{1}{[F_{j}:\Q]^2}}= |\Delta_{F_{j}}|^{\frac{1}{p_{j}^2}}\leq 3,
 \]
concluding the proof.
\end{proof}

\section{Proof of Theorem \ref{main-bld}}\label{proof-main-bld}

\noindent
This section is devoted to the proof of Theorem \ref{main-bld}, which is an easy consequence of the unramified case of
\cite[Theorem 2]{Fil},
which we quote here in the special case $K=\mathbb{Q}$:
\begin{theorem}[Fili]\label{Filthm}
Let $S$ be a finite set of prime numbers and, for every $p\in S$, \red{let}  $E_p/\mathbb{Q}_p$ be \red{a finite} Galois extension with ramification index $e_p$ and inertia degree $f_p$. 
If $L_p$ denotes the maximal Galois extension of $\Q$ inside $E_p$, then
\[
 \liminf_{\alpha\in\bigcap_{p\in S}L_p} h(\alpha)\leq \sum_{p\in S}  \frac{\log p}{e_p(p^{f_p}-1)}.
\] 
\end{theorem}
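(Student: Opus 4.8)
The plan is to obtain the upper bound by constructing, for every real number $r$ with $\log r>\sum_{p\in S}\frac{\log p}{e_p(p^{f_p}-1)}$, infinitely many pairwise distinct algebraic integers that lie in $\bigcap_{p\in S}L_p$ and have Weil height at most $\log r$; letting $r$ decrease to $\prod_{p\in S}p^{1/(e_p(p^{f_p}-1))}$ then gives $\liminf_{\alpha\in\bigcap_{p\in S}L_p}h(\alpha)\le\sum_{p\in S}\frac{\log p}{e_p(p^{f_p}-1)}$, since $\log\prod_{p\in S}p^{1/(e_p(p^{f_p}-1))}=\sum_{p\in S}\frac{\log p}{e_p(p^{f_p}-1)}$. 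The construction runs through capacity theory, via the Fekete--Szeg\H{o} theorem with local conditions (Cantor; Rumely).

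The first step is a local computation. For a finite extension $E/\Q_p$ with ramification index $e$, residue field of cardinality $q=p^f$ and uniformizer $\pi$, the ring of integers $\mathcal O_E$, viewed as a compact subset of $\mathbb C_p$, has logarithmic capacity (transfinite diameter) $\gamma_p(\mathcal O_E)=|\pi|_p^{1/(q-1)}=p^{-1/(e(p^f-1))}$. I would prove this by exhibiting near-Fekete configurations: for each $k$ take the $q^k$ coset representatives of $\mathcal O_E/\pi^k\mathcal O_E$ and estimate the $\pi$-valuation of the associated Vandermonde product; summing $v_\pi(x_i-x_j)$ over all $\binom{q^k}{2}$ pairs of representatives gives $\big(\tfrac1{q-1}+o(1)\big)\binom{q^k}{2}$ as $k\to\infty$ (this configuration is optimal since it spreads the points as evenly as possible modulo each $\pi^m$), whence $\gamma_p(\mathcal O_E)=\lim_k|\mathrm{Vand}|_p^{1/\binom{q^k}{2}}=|\pi|_p^{1/(q-1)}$; alternatively this value is available in the potential-theory literature. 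I also record that the closed unit ball $\{|z|_\ell\le1\}\subseteq\mathbb C_\ell$ has infinite residue field, hence capacity $1$, and that the archimedean disk $\overline D(0,r)$ has capacity $r$.

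The second step applies the Fekete--Szeg\H{o} theorem with local conditions. Fix $r$ as above and consider the adelic set $\mathbb E=(E_v)_v$ given by $E_p=\mathcal O_{E_p}$ for $p\in S$, by $E_\ell=\{|z|_\ell\le1\}$ for every prime $\ell\notin S$, and by $E_\infty=\overline D(0,r)$. Each $E_v$ is compact and stable under $\Gal(\overline{\Q_v}/\Q_v)$: here one uses that every $E_p/\Q_p$ is Galois, so $\mathcal O_{E_p}$ is stable under the local Galois group, and that $\overline D(0,r)$ is stable under complex conjugation. Since all the $E_v$ are balls, the global capacity of $\mathbb E$ with respect to the point at infinity equals $r\cdot\prod_{p\in S}\gamma_p(\mathcal O_{E_p})=r\cdot\prod_{p\in S}p^{-1/(e_p(p^{f_p}-1))}$, which is strictly greater than $1$ by the choice of $r$. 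The theorem then yields infinitely many pairwise distinct algebraic numbers $\alpha$ all of whose $v$-adic conjugates lie in $E_v$ for every place $v$.

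It remains to check these $\alpha$ have the desired properties, and to name the main difficulty. Such an $\alpha$ is $v$-integral at every finite place (its conjugates lie in the unit ball of $\mathbb C_v$), hence an algebraic integer, so $h(\alpha)=\frac1{[\Q(\alpha):\Q]}\sum_{\sigma\colon\Q(\alpha)\hookrightarrow\mathbb C}\log^+|\sigma\alpha|\le\log r$ because all its archimedean conjugates have absolute value at most $r$. Moreover, for each $p\in S$ every $\Q$-conjugate of $\alpha$ lies in $\mathcal O_{E_p}\subseteq E_p$, so the splitting field over $\Q$ of the minimal polynomial of $\alpha$ — a Galois extension of $\Q$ contained in $E_p$ — is contained in $L_p$, whence $\alpha\in\bigcap_{p\in S}L_p$. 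These $\alpha$ thus form an infinite subset of $\bigcap_{p\in S}L_p$ of height $\le\log r$, so $\liminf_{\alpha\in\bigcap_{p\in S}L_p}h(\alpha)\le\log r$ for every admissible $r$; letting $\log r\downarrow\sum_{p\in S}\frac{\log p}{e_p(p^{f_p}-1)}$ completes the argument. I expect the main obstacle to be twofold: correctly computing the $p$-adic capacity of $\mathcal O_{E_p}$ in the genuinely ramified case (the Vandermonde-valuation bookkeeping), and invoking the Fekete--Szeg\H{o} theorem in a form that simultaneously accommodates Galois-stable local constraints at the primes of $S$ and guarantees infinitely many solutions; once these are secured, the rest is formal.
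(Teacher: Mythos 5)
The paper does not prove this theorem; it quotes it from Fili [Fil, Theorem 2] (in the special case $K=\Q$), so there is no in-paper argument to compare against. Your reconstruction via capacity theory is, in outline, Fili's own approach: the local logarithmic capacity of $\mathcal O_{E_p}$ inside $\mathbb C_p$ is indeed $p^{-1/(e_p(p^{f_p}-1))}$ (your Vandermonde bookkeeping is the standard way to see this), the adelic set you build has global capacity $r\cdot\prod_{p\in S}p^{-1/(e_p(p^{f_p}-1))}>1$ precisely when $\log r>\sum_{p\in S}\log p/(e_p(p^{f_p}-1))$, and the passage from ``all conjugates in $\mathcal O_{E_p}$'' to ``$\alpha\in L_p$'' via the splitting field, together with the height bound $h(\alpha)\le\log r$ for these algebraic integers, is correct.

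The point you flag but leave unresolved is, however, exactly where the content of the theorem sits, and your citation ``Fekete--Szeg\H{o} with local conditions (Cantor; Rumely)'' does not by itself close the argument. The classical adelic Fekete--Szeg\H{o} theorem of Cantor produces algebraic numbers whose conjugates lie only in an \emph{adelic neighbourhood} of the prescribed compacta, and a neighbourhood of $\mathcal O_{E_p}$ in $\mathbb C_p$ is strictly larger than $\mathcal O_{E_p}$: since $E_p$ is a finite extension of $\Q_p$, the set $\mathcal O_{E_p}$ has empty interior in $\mathbb C_p$ (the condition $z\in E_p$ is infinitely thin; $\mathcal O_{E_p}$ is not a finite union of $\mathbb C_p$-discs). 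An algebraic number whose conjugates merely approximate $\mathcal O_{E_p}$ need not have any conjugate in $E_p$, and your step ``the splitting field of $\alpha$ is a Galois extension of $\Q$ contained in $E_p$, hence in $L_p$'' would fail. What is actually needed is Rumely's Fekete--Szeg\H{o} theorem \emph{with splitting conditions}, which is formulated for compact, $\Gal(\overline{\Q_p}/\Q_p)$-stable sets of the type $\mathcal O_{E_p}$ and yields infinitely many algebraic numbers whose conjugates lie in $\mathcal O_{E_p}$ itself. That is the result Fili invokes; with it in place (and the check that $\mathcal O_{E_p}$ satisfies its hypotheses, which uses that $E_p/\Q_p$ is Galois), your argument becomes complete.
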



\begin{proof}[Proof of Theorem \ref{main-bld}]
Let $p_1,p_2,\dots$ be the sequence of prime numbers.
For each $i$ let $E_i$ be the unique unramified quadratic extension of $\mathbb{Q}_{p_i}$ and let $L_i$ be the maximal Galois extension of $\mathbb{Q}$ inside $E_i$.
Theorem \ref{Filthm} gives that, \red{for $n\geq 1$}
\[
  \liminf_{\alpha\in\bigcap_{i=1}^n L_i} h(\alpha)\leq\sum_{i=1}^n \frac{\log(p_i)}{p_i^{2}-1}<\sum_{k=1}^\infty \frac{\log(k)}{k^2}=:T<\infty.
\]
\red{So} for $n\geq 1$, we can iteratively pick algebraic elements $x_n\in\bigcap_{i=1}^n L_i\smallsetminus\{x_1,\dots,x_{n-1}\}$ with $h(x_n)<T$.
Then the Galois closure $L$ of the field $\mathbb{Q}(x_1,x_2,\dots)$ satisfies the claim.

Indeed,
clearly $L$ \red{does} not satisfy property (N), as $\{x_1,x_2,\dots\}$ is an infinite set of elements of $L$ of height at most $T$.
Furthermore, \red{for every $i\geq 1$}, the local \red{degree} of $L$ at $p_i$ \red{is} finite, as
$$
 [LE_i:E_i] \leq [LL_i:L_i]=[L_i(x_1,\dots,x_{i-1}):L_i]<\infty
$$
and therefore also 
$$
 [L\mathbb{Q}_{p_i}:\mathbb{Q}_{p_i}]\leq[LE_i:\mathbb{Q}_{p_i}]=[LE_i:E_i]\cdot [E_i:\mathbb{Q}_{p_i}]=[LE_i:E_i]\cdot 2<\infty.
$$ 
\end{proof}

\appendix

\section{Proof of Theorem \ref{sha-split}}
\label{shafarevich}

\noindent
Let $G$ be a finite solvable group, $K$ a number field and $S$ a finite set of primes of $K$.
We proceed, as in \cite[pag. 596]{NSW}, by induction on the order of the  group $G$. 
Suppose the statement is true for any solvable group of order strictly smaller than $|G|$.
By \cite[Propositions 9.6.8 and 9.6.9]{NSW} there exists a surjection $N\rtimes H\rightarrow G$ where $N$ is the (nilpotent) Fitting subgroup of $G$ and  $H$ is a proper subgroup of $G$. 
By the induction hypothesis there exists a Galois extension $L/K$ with Galois group $H$ which is totally split at all primes in $S$.
We now need to prove a refined version of \cite[Theorem 9.6.6]{NSW}, namely that every split embedding problem   
 \[1\rightarrow N\rightarrow N\rtimes H\rightarrow H\rightarrow 1\]
 for $L/K$ with finite nilpotent kernel $N$  has a proper solution which is totally split at all primes in $S$ \red{(see \cite[Definition 3.5.1]{NSW} for more details on embedding problems)}. 
Indeed, if $M/K$ is such a solution, then $\mathrm{Gal}(M/K)=N\rtimes H$ and therefore $M/K$ will contain a subextension $M'/K$ with $\mathrm{Gal}(M'/K)$ isomorphic to $G$.

We recall some notation: For a prime number $p$, we let $\mathcal{F}(n)$ be the free pro-$p$-$H$ operator group of rank $n$ (as defined in \red{\cite[Ch.~IX \S6 p.~578]{NSW}}). For $\nu = (i, j)$ with $i\geq j \geq 1$ we denote by $\mathcal{F}(n)^{(\nu)}$
the filtration of $\mathcal{F}(n)$ refining the descending $p$-central series, as defined in \cite[Ch.~III \S8]{NSW}. 
As every finite nilpotent group is a direct product of its $p$-Sylow subgroups and every finite $H$-operator $p$-group is a quotient of $\mathcal{F}(n)/\mathcal{F}(n)^{(\nu)}$ for some $n$ and $\nu$, cf.~\cite[Ch.~IX p.~584]{NSW}, our problem reduces to showing that for every prime $p$, every $n\geq 1$ and every $\nu=(i,j)$, the split embedding problem 
\begin{equation}\label{emb}
1\rightarrow \mathcal{F}(n)/\mathcal{F}(n)^{(\nu)}\rightarrow \mathcal{F}(n)/\mathcal{F}(n)^{(\nu)}\rtimes H\rightarrow H\rightarrow 1
\end{equation}
for $L/K$ has a solution $M/K$ which is totally split at primes in $S$.
This result can be proved using \cite[Theorem 9.6.7]{NSW}, which gives the existence of solutions to such split embedding problem with special local behavior.  
We adapt the proof of \cite[Theorem 6.2]{KM}, which is a generalisation of \cite[Theorem 9.6.7]{NSW} in the unramified case:

Write $S=S_0\cup S_\infty$ where $S_0=\{\mathfrak{p}_1,\ldots,\mathfrak{p}_m\}$ are the finite primes in $S$,
and let $p_i$ be the prime number generating the ideal $\mathfrak{p}_i\cap \mathbb{Z}$.
Let $\mu$ be the number of roots of unity in $K$
and choose a prime number $f>p\mu\cdot|H|\cdot[K:\Q]$.
As $f$ divides $\varphi(p_i^{f}-1)$, where $\varphi$ is the Euler totient function, 
and every prime divisor of $\varphi(p_i^{f}-1)$ is of the form $q$ or $q-1$ for a prime divisor $q$ of $p_i^f-1$,
there exists a prime number
$e_i\geq f$ that divides $p_i^{f}-1$.
The extension $M_i=K_{\mathfrak{p}_i}(\zeta_{p_i^{f}-1}, {p_i}^{1/e_i})$ of the completion $K_{\mathfrak{p}_i}$ is Galois with ramification index $e_i$,
and $G_i:={\rm Gal}(M_i/K_{\mathfrak{p}_i})$ is of order $e_if$ prime to $p$, $\mu$, $|H|$ and $[K:\Q]$. 
Let $\Gamma=\prod_{i=1}^m G_i$ and note that $\Gamma$ is solvable of order coprime to $\mu$.
Thus, by \cite[Corollary 2, p.156]{Neuk} (see also \cite[Theorem 9.5.5]{NSW})
there exists a Galois extension $L'/K$ with Galois group $\Gamma$ 
that has completion $L'_{\mathfrak{p}_i}=M_i$ at every $\mathfrak{p}_i$,
and completion $L'_\mathfrak{p}=K_\mathfrak{p}$ at every $\mathfrak{p}\in S_\infty$.
In particular, $L'$ is ramified at all $\mathfrak{p}\in S_0$ and totally split at all $\mathfrak{p}\in S_\infty$.
As $[L':K]=|\Gamma|$ is coprime to $[L:K]=|H|$,
$L'$ is linearly disjoint from $L$ over $K$, 
so that $\mathrm{Gal}(L'L/K)=H\times \Gamma$. 
By  \cite[Theorem 9.6.7]{NSW} we can find a solution $F/K$ to the embedding problem 
\begin{equation*}\label{emb-p}1\rightarrow \mathcal{F}(n)/\mathcal{F}(n)^{(\nu)}\rightarrow \mathcal{F}(n)/\mathcal{F}(n)^{(\nu)}\rtimes (H\times \Gamma) \rightarrow H\times \Gamma\rightarrow 1
\end{equation*}
for $L'L/K$ 
with the factor $\Gamma$ acting trivially, i.e. 
\[
 \mathcal{F}(n)/\mathcal{F}(n)^{(\nu)}\rtimes (H\times \Gamma)=(\mathcal{F}(n)/\mathcal{F}(n)^{(\nu)}\rtimes H)\times \Gamma,
\] 
which is totally split at all
infinite primes of $L'L$ as well as
at the finite primes of $L'L$ above the primes in $S_0$ (as they ramify in $L'L/K$).
The subfield $M$ of $F$ fixed by the factor $\Gamma$
is then a solution of the embedding problem \eqref{emb}.
The ramification index and the inertia degree of
each $\mathfrak{p}\in S_0$ in $F/K$ divide $|\Gamma|$,
and every $\mathfrak{p}\in S_\infty$ is totally split in $F/K$.
Thus, as $\mathrm{Gal}(M/K)=\mathcal{F}(n)/\mathcal{F}(n)^{(\nu)}\rtimes H$ has order prime to $|\Gamma|$,  
all $\mathfrak{p}\in S$ are totally split in $M$.

\section*{Acknowledgments} 
\noindent
The authors thank Lukas Pottmeyer for pointing out the results in \cite{Fil},
Emmanuel Kowalski for clarifications regarding \cite{IK},
Alexandra Shlapentokh for suggesting to include applications to undecidability,
and Fran\c{c}ois Legrand and Umberto Zannier for helpful comments and suggestions.
The first author's work has been funded by the ANR project Gardio 14-CE25-0015.
The second author was funded by the Deutsche Forschungsgemeinschaft (DFG) - 404427454.

\end{document}